\newcommand{\Sm}{\operatorname{small}}
\newcommand{\homdeg}{\operatorname{hom-deg}}
\newcommand{\supp}{\operatorname{supp}}
\newcommand{\ab}{\operatorname{ab}}
\newcommand{\pr}{\operatorname{pr}}
\newcommand{\RE}{\operatorname{Re}}
\newcommand{\meas}{\operatorname{meas}}
\newcommand{\Ht}{\operatorname{ht}}
\newcommand{\bR}{\mathbb{R}}
\newcommand{\zed}{\mathbb{Z}}
\newcommand{\id}{\mathrm{id}}
\newcommand{\utau}{\underline{\tau}}
\newcommand{\ux}{\underline{x}}
\newcommand{\umu}{\underline{\mu}}
\newcommand{\fg}{{\mathfrak{g}}}
\newcommand{\fa}{{\mathfrak{a}}}
\newcommand{\E}{\mathbf{E}}
\newtheorem{theorem}{Theorem}
\newtheorem{lemma}[theorem]{Lemma}
\theoremstyle{remark}
\title{The local limit theorem on nilpotent Lie groups}
\author{Robert Hough}
\address[Robert Hough]{Department of Mathematics, Stony Brook University, 100 Nicolls Road, Stony Brook, NY 11794}
\email{robert.hough@stonybrook.edu}
\subjclass[2010]{Primary 60F05, 60B15, 20B25, 22E25,  60J10, 60E10, 60F25, 60G42}
\keywords{Random walk on a group, local limit theorem, nilpotent group}
\thanks{Robert Hough is supported by NSF Grant DMS-1712682, ``Probabilistic methods in discrete structures and applications.''}
\thanks{The author thanks Persi Diaconis for his continued interest in the project, and Terence Tao for a helpful conversation.}
\begin{document}

\begin{abstract}
A local limit theorem is proven on connected, simply connected nilpotent Lie groups, for a class of generating measures satisfying a moment condition and a condition on the characteristic function of the abelianization.  The result extends an earlier local limit theorems of Alexopoulos which treated absolutely continuous measures with a continuous density of compact support, and also extends local limit theorems of Breuillard and Diaconis-Hough which treated general measures on the Heisenberg group.
\end{abstract}

\maketitle

\section{Introduction}

Let $G$ be a connected, simply connected nilpotent Lie group.  Alexopoulos \cite{A02a} studied the repeated convolution $\varphi^{*N}$ of a continuous, compactly supported probability density on $G$, approximating the density pointwise with the corresponding heat kernel.  See also \cite{A02b} for related estimates on discrete nilpotent groups and \cite{A02c} for estimates related to the heat kernel. In \cite{B04} and \cite{B05}, Breuillard obtained local limit theorems on the Heisenberg group for general centered measures of compact support.  His theorems were refined by the author and Diaconis in \cite{DH15} weakening the technical assumptions and obtaining the optimal rate. This article extends the method of \cite{DH15} to obtain a local limit theorem with an optimal rate for general Borel probability driving measures on a connected, simply connected nilpotent Lie group, subject to mild technical conditions.

Given $f \in C_c(G)$ and $g \in G$, let $L_g f$ and $R_g f$ denote the left and right translation of $f$ by $g$. For a Borel probability measure $\mu$ on $G$ with finite second homogeneous moments, let $L_\mu$ be the corresponding sub-Laplacian and let $u_t$ be the fundamental solution to the heat equation
$
  \left(\frac{\partial}{\partial t} + L_\mu\right) u_t = 0,$ $t>0$.
\begin{theorem}\label{main_theorem}
 Let $G$ be a connected, simply connected nilpotent Lie group of step $s$ and homogeneous degree $D$.  Let $\mu$ be a Borel probability measure with projection $\mu_{\ab}$ to the abelianization $G_{\ab} = G/[G,G]$ satisfying
 \begin{enumerate}
  \item [i.] (Centered) $\mu_{\ab}$ is mean zero.
  \item [ii.] (Cram\'{e}r) There is a constant $c>0$ and a neighborhood $U$ of 0 in $\hat{G}_{\ab}$ containing 0 such that the characteristic function $\hat{\mu}_{\ab}$ satisfies
  \begin{equation}
   \sup_{\xi \in U^c} \left|\hat{\mu}_{\ab}(\xi)\right| < 1-c.
  \end{equation}
 \end{enumerate}
For all $A > 0$, if $\mu$ has $O_{A,D}(1)$ finite homogeneous moments, then 
uniformly in $g, h \in G$, for all Lipschitz $f \in C_c(G)$,  as $N \to \infty$,
\begin{equation}
 \left\langle L_gR_h f, \mu^{*N}\right \rangle = \left\langle L_gR_h f, u_N\right \rangle + O_\mu\left(\|f\|_1N^{-\frac{D+1}{2}} \right) + O_{\mu, A, f}\left( N^{-A}\right).
\end{equation}
\end{theorem}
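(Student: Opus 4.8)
The plan is to follow the Fourier-theoretic strategy of \cite{DH15}: express both sides of the claimed identity through the Plancherel formula on $G$, rescale the unitary dual by the homogeneous dilations, and compare the two integrands region by region. \emph{Step 1 (Fourier set-up).} Fix a Malcev basis of $\fg$ adapted to the lower central series and to the dilations $\{\delta_t\}_{t>0}$, and parametrize $\hat G$ by coadjoint orbits $\ell\in\fg^*$ via Kirillov's correspondence, using Schr\"odinger models for the generic representations $\pi_\ell$. After a routine mollification of $f$ on $G$, with the difference controlled by the Lipschitz norm and compact support, the Plancherel/inversion formula on the unimodular type I group $G$ yields, for a suitable density $d\kappa(\ell)$ on a cross-section of the orbit space,
\begin{equation*}
\langle L_g R_h f,\mu^{*N}\rangle=\int \operatorname{tr}\!\left(\pi_\ell(g)\,\pi_\ell(f)\,\pi_\ell(h)\,\pi_\ell(\mu)^N\right)d\kappa(\ell),
\end{equation*}
together with the same identity for $u_N$ in place of $\mu^{*N}$ and $\pi_\ell(u_N)$ in place of $\pi_\ell(\mu)^N$; here $\pi_\ell(\mu)=\int_G\pi_\ell(x)\,d\mu(x)$, with the convention making $\pi_\ell(\mu)^N=\pi_\ell(\mu^{*N})$. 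The non-generic coadjoint strata form a lower-dimensional locus whose contribution I would absorb into the error by induction on the step $s$ or a direct estimate. Substituting $\ell=\delta^*_{1/\sqrt N}\tilde\ell$, where $\delta^*_{1/\sqrt N}$ is dual to the dilation $\delta_{1/\sqrt N}$, the density scales so as to produce the expected overall factor $N^{-D/2}$, matching the on-diagonal size of the heat kernel; so it suffices to control, in trace norm and integrably in $\tilde\ell$, the difference
\[
\left\|\pi_{\delta^*_{1/\sqrt N}\tilde\ell}(\mu)^N-\pi_{\tilde\ell}(u_1)\right\|.
\]

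\emph{Step 2 (bulk: central limit/Edgeworth expansion).} For $\tilde\ell$ in a fixed compact set I would expand $\pi_\ell(\exp X)=\exp(d\pi_\ell(X))$ in the Malcev coordinates of points of $\supp\mu$ and Taylor-expand the resulting element of the group algebra, the finitely many homogeneous moments (up to an order depending on $s$, $D$, and $A$) being used to bound the remainder. Under $\ell=\delta^*_{1/\sqrt N}\tilde\ell$ the part of $d\pi_{\tilde\ell}$ along a weight-$j$ basis vector is unitarily conjugate to $N^{-j/2}$ times a fixed operator; hence in $\log\pi_\ell(\mu)=N(\text{average of Baker--Campbell--Hausdorff terms})$ the weight-$1$ linear term carries the factor $N\cdot N^{-1/2}\cdot(\text{mean of }\mu_{\ab})$, which vanishes by hypothesis (i), the weight-$1$ quadratic term and the weight-$2$ linear term each carry $N\cdot N^{-1}=O(1)$ and combine, through Baker--Campbell--Hausdorff, into precisely $d\pi_{\tilde\ell}(L_\mu)$, and all weight-$\geq 3$ and higher-order contributions are $O(N^{-1/2})$. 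Exponentiating gives $\pi_{\delta^*_{1/\sqrt N}\tilde\ell}(\mu)^N=e^{-d\pi_{\tilde\ell}(L_\mu)}+O(N^{-1/2})$, and the explicit model identifies $e^{-d\pi_{\tilde\ell}(L_\mu)}=\pi_{\tilde\ell}(u_1)$; hypoellipticity of $L_\mu$ (H\"ormander's condition holds since the generating directions bracket-generate $\fg$) furnishes the Schwartz decay of $\tilde\ell\mapsto\|\pi_{\tilde\ell}(u_1)\|_{\mathrm{tr}}$ needed for integrability. Pairing against $\pi_\ell(f)$ with $\|\pi_\ell(f)\|_{\mathrm{op}}\leq\|f\|_1$ and against the unitaries $\pi_\ell(g),\pi_\ell(h)$, integrating in $\tilde\ell$, and undoing the rescaling produces the term $O_\mu(\|f\|_1 N^{-(D+1)/2})$, uniformly in $g,h$.

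\emph{Step 3 (high frequencies: Cram\'er).} It remains to bound the contribution of $\tilde\ell$ outside a large ball, that is, of parameters $\ell$ outside a shrinking neighborhood of $0$, and to show it is $O_{\mu,A,f}(N^{-A})$ (indeed exponentially small). I would split according to the abelianized part $\ell_{\ab}$ of $\ell$: when $\ell_{\ab}\notin U$, transfer the bound $|\hat\mu_{\ab}(\ell_{\ab})|<1-c$ of hypothesis (ii) to $\|\pi_\ell(\mu)\|_{\mathrm{op}}<1-c'$ by restricting $\pi_\ell$ to a maximal abelian subgroup, on which it decomposes as a direct integral of abelian characters near $\ell_{\ab}$, whence $\|\pi_\ell(\mu)^N\|_{\mathrm{op}}\leq(1-c')^N$; when $\ell_{\ab}\in U$ but the transverse (central) part of $\ell$ is large, a dyadic decomposition in that parameter together with the absolute continuity of the abelianized marginal of $\mu^{*K}$ for a bounded $K$ (a consequence of (ii)) and the homogeneous moment bounds gives the analogous decay. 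Crude frequency-decay estimates for $\pi_\ell(f)$, coming from the Lipschitz regularity and compact support of $f$, make the remaining integral converge, and taking the bulk/tail threshold to be a large power of $\log N$, afforded by the $O_{A,D}(1)$ moments via Markov's inequality applied to the homogeneous norm under $\mu^{*N}$, makes this contribution $O_{\mu,A,f}(N^{-A})$. Combining Steps 2 and 3 with the matching expansion of $\langle L_g R_h f,u_N\rangle$ proves the theorem.

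The hard part will be Step 3: since Cram\'er's condition is imposed only on the abelianization, one must propagate it to a uniform spectral-radius bound for $\pi_\ell(\mu)$ across the genuinely infinite-dimensional part of $\hat G$, and simultaneously handle the non-generic coadjoint strata, with enough uniformity and super-polynomial decay in $N$. A secondary difficulty is the multi-scale Baker--Campbell--Hausdorff bookkeeping in Step 2 needed to pin down the $N^{-1/2}$ rate and to identify the limit exactly with $d\pi(L_\mu)$, and one must verify throughout that the regularization of $f$ and the passage to the Plancherel formula are justified for the possibly singular measures $\mu^{*N}$.
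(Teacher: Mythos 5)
You take a genuinely different, representation-theoretic route (Kirillov orbit method and Plancherel on $G$), whereas the paper works entirely with abelian Fourier analysis on the Lie algebra: a scalar characteristic function $\chi_{N,\mu}(\xi)$ on $\hat{\fg}\cong\bR^q$, a combinatorial rearrangement group action combined with Gowers--Cauchy--Schwarz to collapse the phase to iterated commutators, and a Lindeberg exchange against an explicitly constructed compactly supported density $\mu_\varphi$ with matching low homogeneous moments, after which Alexopoulos's theorem is invoked. The paper's introduction points out that Breuillard's representation-theoretic approach was expected to extend only to Heisenberg-type groups and that ``new methods would need to be developed to handle the higher step cases''; that difficulty is exactly where your argument breaks down.

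\textbf{The gap is in Step 3.} Your mechanism for transferring the Cram\'{e}r bound $|\hat\mu_{\ab}(\ell_{\ab})|<1-c$ to $\|\pi_\ell(\mu)\|_{\mathrm{op}}<1-c'$ by restricting $\pi_\ell$ to a maximal abelian subgroup does not work. For a generic (infinite-dimensional) $\pi_\ell$, the restriction to a polarizing abelian subgroup decomposes as a direct integral of characters whose parameters range over the projection of the entire coadjoint orbit $\mathcal{O}_\ell$, and this projection typically covers all of $\hat G_{\ab}$. In particular characters arbitrarily near $0$, where $|\hat\mu_{\ab}|$ is close to $1$, occur in the direct integral; already for the Heisenberg group and any $\ell$ with nonzero central coordinate one has $\|\pi_\ell(\mu)\|_{\mathrm{op}}=1$ for any probability measure $\mu$. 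So the operator norm cannot decay, and the quantity that must be shown to decay is a trace or Hilbert--Schmidt norm of $\pi_\ell(\mu)^N$ with uniformity over all strata and super-polynomial rate in $N$; the sketch provides no mechanism for this. The paper sidesteps the issue entirely by never leaving the scalar setting: the rearrangement action $A_{k,n}^{N'}$ plus Gowers--Cauchy--Schwarz (Lemmas \ref{alternating_sum_lemma}--\ref{gcs_lemma}) reduce $\chi_{N,\mu}(\xi)$ at level $n$ to the characteristic function of the commutator measure $\mu_n$ on $G_n/G_{n+1}$, whose Cram\'{e}r property is propagated from $\mu_{\ab}$ in Lemma \ref{cramer_lemma} and made quantitative in Lemma \ref{char_fun_bound_lemma}.

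\textbf{A secondary gap is in Step 2.} Your expansion $\log\pi_\ell(\mu)=N\cdot(\text{BCH average})$ and the conclusion $\pi_\ell(\mu)^N=e^{-d\pi_{\tilde\ell}(L_\mu)}+O(N^{-1/2})$ involve a logarithm of a non-normal contraction and unbounded generators $d\pi_\ell(X)$; controlling these expansions in trace norm with integrability in $\tilde\ell$, and extracting the exact $N^{-1/2}$ rate from third-moment cancellation, is a substantial program you leave unaddressed. The paper avoids all operator-theoretic delicacy by matching three homogeneous moments of $\mu$ with $\mu_\varphi$ and carrying out a term-by-term Lindeberg exchange at the level of the scalar characteristic function (Lemma \ref{Lindeberg_lemma}), with moment bounds from Lemmas \ref{homogeneous_poly_lemma}--\ref{coordinate_bounds_lemma}. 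As written, your proposal identifies but does not resolve the essential obstructions, so it does not constitute a proof.
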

\noindent
The dependence on $f$ in the second error term may be controlled in terms of the maximum distance of $\supp f$ from the identity.

The rate is optimal, as may be seen by projecting to the abelianization. The optimal rate does not hold without some decay condition on the characteristic function of the abelianization, although a weaker condition than Cram\'{e}r would suffice. A different limit is obtained in the lattice case, again with optimal rate, by \cite{A02b}.  In \cite{DH15} it is shown that on the the Heisenberg group, the limit statement without a rate can be obtained with the Cram\'{e}r condition replaced with $\left|\hat{\mu}_{\ab}(\xi)\right| \neq 1$ if $\xi \neq 0$; the corresponding statement on a general nilpotent group is currently open.  

\subsection{Discussion of method, and possible extensions}
Theorem \ref{main_theorem} is of the type proved by Breuillard \cite{B04}, \cite{B05} in which an arbitrary translation to the test function is permitted on the left and right.  The proof given there used the representation theory of the real Heisenberg group.  In \cite{B05} Breuillard writes that he expects his analysis to carry through to general Heisenberg groups, but that new methods would need to be developed to handle the higher step cases treated here.   Alexopoulos's theorems hold in the greater generality of groups of polynomial volume growth.  The proofs first establish the results in the connected, simply connected nilpotent case using time domain PDE methods.  It is of interest to obtain the local theorem for general measures in this full generality. 

Theorem \ref{main_theorem} is proved via harmonic analysis on the Lie algebra. At phases much larger than the scale of the distribution, the i.i.d. nature of the increments of the walk is used with a rearrangement group action, followed by the Gowers-Cauchy-Schwarz inequality applied to the characteristic function.  This has the effect of taking iterated commutators on the group $G$ to reduce to the abelian case.  An analogous argument appears in the work of Green and Tao \cite{GT12} in the analysis of polynomial orbits on nilmanifolds.  At frequencies near the scale of the distribution, a Lindeberg replacement scheme is used to replace increments of the walk with those of a continuous compactly supported density with the same heat kernel, thus reducing to Alexopoulos' theorem.  It would be possible to make the replacement with increments of the heat kernel directly thus making the argument independent of \cite{A02a}, but the analysis then becomes technically more involved.   

 \subsection{Historical review} Central limit theorems on Lie groups have a long history, with early theorems proven by Wehn \cite{W62} and Tutubalin \cite{T64}, see also \cite{SV73}, \cite{CR78}, \cite{VSC92} and \cite{DS95}.  A central limit theorem with an optimal rate on stratified nilpotent groups is obtained under a homogeneous moment condition in \cite{P91}.

Alexopoulos, and Alexopoulos and Lohou\'{e} have made a detailed study of convolution powers of continuous densities, heat kernels and related questions on Lie groups, see \cite{A91}, \cite{A93}, \cite{A02a}, \cite{A02b}, \cite{A02c}, \cite{AL03}, and \cite{AL12}.  

A famous local limit theorem was proven by Bougerol \cite{B82} for convolutions of densities on a semi-simple group, which used the group's representation theory.  
There are still relatively few local limit theorems on non-abelian Lie groups that treat a measure which is not supported on a discrete group, or is not absolutely continuous with respect to Haar measure, of which \cite{B04}, \cite{B05} are an early example.  Recently,  Varj\'{u} has obtained such a local limit theorem for random walks on Euclidean space with transitions by a random isometry \cite{V15}.

\section*{Notation and conventions}
The connected, simply connected nilpotent Lie group of the theorem is $G$, with Lie algebra $\fg$ of dimension $q$.  The lower central series of $G$ is 
\begin{equation}
 G = G_1, \qquad G_{i+1} = [G, G_i], \; i \geq 1.
\end{equation}
The Lie algebra of $G_i$ is $\fg_i$.  The Lie algebra $\fg$ is identified with $\bR^q$ by choice of basis, which is fixed throughout the argument.  Vectors $x \in \bR^q$ are written in plain text and sequences of vectors $\ux \in (\bR^q)^N$ are written with an underscore.    The norm $\|\cdot\|$ refers to the $\ell^2$ norm on Euclidean space and is applied to $\fg$ and $\hat{\fg}$ by the fixed choice of basis.  The character on $\bR^q$ is written $e_\xi(x) = e^{2\pi i \xi \cdot x}$.  

A bump function $\sigma$ on $\bR^q$ is a non-negative $C^\infty$ function of compact support with integral 1.  Its dilation by $t > 0$ is indicated $\sigma_t(x) = t^q \sigma(tx)$.

Convolution of Borel probability measures $\mu, \nu$ on $G$ is defined weakly by, for $f \in C_c(G)$, 
\begin{equation}
 \langle f, \mu*\nu\rangle = \int_G \int_G f(gh) d\mu(g) d\nu(h).
\end{equation}
For $N \geq 1$, the iterated convolution $\mu^{*N}$ is defined by
\begin{equation}
 \mu^{*1} = \mu, \qquad \mu^{*(i+1)} = \mu * \mu^{*i},\; (i \geq 1).
\end{equation}
The characteristic function of a probability measure $\nu$ on $\bR^n$, respectively the Fourier transform of an $L^1$ function $f$, is defined to be
\begin{equation}
 \hat{\nu}(\xi) = \int_{\bR^n} e_{-\xi}(x)d\nu(x), \qquad \hat{f}(\xi) = \int_{\bR^n} f(x) e_{-\xi}(x)dx.
\end{equation}
If $f\in L^1$ is smooth, then the Fourier inversion integral is absolutely convergent, and
\begin{equation}
 f(x) = \int_{\bR^n} \hat{f}(\xi)e_{\xi}(x)d\xi.
\end{equation}

$C_2 = \zed/2\zed$ is the group of two elements.  For $\tau \in (C_2)^d$, $|\tau|$ is the Hamming or $\ell^1$ norm, which counts the number of non-zero coordinates.

 The notation $f = O_A(g)$ has the same meaning as $f \ll_A g$.  Both indicate that  $|f| \leq C(A) g$ for some constant $C(A)>0$ which depends at most on $A$ and possibly the structure of $G$.

\section{Nilpotent Lie groups}\label{nilpotent_group_section} A useful reference for the theory of nilpotent Lie groups is \cite{CG90}.

Given $G$, a connected, simply connected nilpotent Lie group with Lie algebra $\fg$ of dimension $q$, the exponential map $\exp$ is a diffeomorphism which identifies $G$ with $\fg$. Given a probability measure $\mu$ on $G$, denote $\mu_{\fg}$ its push-forward by the logarithm map to a probability measure on the Lie algebra; $\mu^{*k}_{\fg}$ should be read $(\mu^{*k})_{\fg}$ so that convolution is performed on $G$.  

Let the lower central series be defined by $\fg_1 = \fg$ and, for $i \geq 1$, $\fg_{i+1} = [\fg_1, \fg_i]$.  Since $\fg$ is nilpotent, one has the filtration
\begin{equation}
 \fg = \fg_1 \supset \fg_2 \supset \cdots \supset \fg_s \supset \fg_{s+1} = \{0\}, \qquad \fg_s \neq \{0\};
\end{equation}
$s$ is called the step of $\fg$. One can check, for $i + j \leq s+1$, $[\fg_i, \fg_j] \subset \fg_{i+j}$.  Also, one has $G_i = \exp \fg_i$ is the $i$th group in the lower central series of $G$. 

Let $\fa_1, \cdots, \fa_s$ be subspaces of $\fg$ such that, for each $1 \leq i \leq s$,
\begin{equation}
 \fg_i = \fa_i \oplus \cdots \oplus \fa_s.
\end{equation}
Let $d_i = \dim \fa_i$ and $q = \dim \fg$.  The homogeneous dimension of $G$ is 
\begin{equation}
 D = \sum_{i=1}^s i d_i.
\end{equation}
Assume given a basis $\{X_{i,j}\}_{\substack{1 \leq i \leq s\\ 1 \leq j \leq d_i}}$ of $\fg$ satisfying $\{X_{i,j}\}_{1 \leq j \leq d_i}$ is a basis for $\fa_i$. Identify $\fg$ with $\bR^q$ via 
\begin{equation}
 \psi: \begin{pmatrix} x^{(1,1)}\\ \vdots \\ x^{(s,d_s)} \end{pmatrix} = x^{(1,1)} X_{1,1}+ \cdots + x^{(s, d_s)} X_{s, d_s}.
\end{equation}
Having made this choice of coordinates, Haar measure on $G$ is normalized by pushing forward Lebesgue measure on $\fg$ by the exponential map. Exponential coordinates of the first kind are defined by
\begin{equation}
 \phi: \bR^q \to G, \qquad \phi: \begin{pmatrix} x^{(1,1)}\\ \vdots \\ x^{(s,d_s)} \end{pmatrix} \mapsto \exp \left( \psi(x)\right).
\end{equation}

Write for $1 \leq n \leq s$, 
\begin{equation}
 x^{(n)} = \begin{pmatrix} x^{(n,1)} \\ \vdots \\ x^{(n, d_n)}\end{pmatrix}, \qquad \xi^{(n)} = \begin{pmatrix} \xi^{(n,1)}\\ \vdots \\ \xi^{(n, d_n)} \end{pmatrix}
\end{equation}
for the coordinates at level $n$ in the filtration, respectively the corresponding dual frequencies in $\hat{\fg}$. These coordinates are said to have homogeneous degree $n$. For \begin{equation}S \subset \{(i,j): 1\leq i \leq s, 1 \leq j \leq d_i\},\end{equation} a monomial $m_\alpha(x) = \prod_{(i,j) \in S} \left(x^{(i,j)}\right)^{\alpha^{(i,j)}}$ with each $\alpha^{(i,j)} \neq 0$ has homogeneous degree 
\begin{equation}
\homdeg(m_\alpha) = \sum_{(i,j) \in S} i \alpha^{(i,j)}.
\end{equation}
The homogeneous degree of a monomial in several variables is defined to be the sum of the homogeneous degrees in the variables separately.  Note that the homogeneous degree is equal to the degree if and only if for every $(i,j) \in S$,  $i = 1$.
A probability measure $\mu$ on $G$ is said to have $d$ finite homogeneous moments if, for all monomials $m_\alpha$ of degree at most $d$,
\begin{equation}
 \int_{\fg} |m_\alpha(x)| d\mu_{\fg}(x)< \infty. 
\end{equation}

\subsection{Heat kernel and approximating continuous measure}Given a centered measure $\mu$ with two finite homogeneous moments on $G$, define the associated sub-Laplacian 
\begin{equation}
L_\mu = - \sum_{1 \leq i, j \leq d_1} a_{ij} X_{1,i} X_{1,j} - \sum_{ i \leq d_2} a_i X_{2,i}
\end{equation}
with coefficients
\begin{align}
 a_{ij} &= \frac{1}{2} \int x^{(1,i)}x^{(1,j)} d\mu_{\fg}(x),\\
\notag b_i &= \int x^{(2,i)} d\mu_{\fg}(x)
\end{align}
and 
\begin{equation}
 a_i = b_i - \frac{1}{2}\sum_{1 \leq j < k \leq d_1} a_{jk} \pr_{2,i}[X_{1,j}, X_{1,k}].
\end{equation}
Denote $u_t(x)$   the fundamental solution of the heat equation
\begin{equation}
\left(\frac{\partial}{\partial t} + L_\mu \right)u = 0, \qquad u_0 = \delta_{\id}.
\end{equation}

Fix $\varphi = \varphi(\mu)$  a continuous, compactly supported probability density on $G$ with push-forward $\varphi_{\fg}$ to $\fg$, which has first three homogeneous moments matching those of $\mu$, which are assumed finite.  
In particular, $\varphi$ is centered.
The existence of such a continuous $\varphi$ follows since $\mu_{\ab}$ has support generating a dense subgroup of $G_{\ab}$ and the quantities required on higher levels in the filtration are at most first degree in those variables.

Since the sub-Laplacian generated by $\mu$ and $\varphi$ depends only on the first two moments of the abelianized measures and the mean in $G_2/G_3$, the heat kernels of $\mu$ and $\varphi$ agree.  
By \cite{A02a} Theorem 1.9.1, Theorem \ref{main_theorem} holds with $\mu_\varphi = \varphi(g) dg$ in place of $\mu$. The argument presented reduces the local limit theorem for $\mu$ to that for $\mu_{\varphi}$.

\subsection{The product rule}
$G$ is identified with $(\bR^q, *)$ with the group law 
\begin{align}
 &\begin{pmatrix}
  x^{(1, 1)}\\ \vdots \\ x^{(s,d_s)}
 \end{pmatrix}*
 \begin{pmatrix}
  {y^{(1, 1)}} \\ \vdots \\ {y^{(s,d_s)}}
 \end{pmatrix}
 = \begin{pmatrix}
    {z^{(1, 1)}} \\ \vdots \\ {z^{(s,d_s)}}
   \end{pmatrix} = \phi^{-1}\left(\phi\begin{pmatrix}
  x^{(1, 1)}\\ \vdots \\ x^{(s,d_s)}
 \end{pmatrix} \cdot \phi \begin{pmatrix}
  {y^{(1, 1)}} \\ \vdots \\ {y^{(s,d_s)}}
 \end{pmatrix}\right) .
\end{align}
Given a sequence of vectors $\ux \in (\bR^q)^N$, write \begin{equation}\Pi(\ux) = x_1 * x_2 *\cdots *x_N \in \bR^q\end{equation} for their product.  The basic object of study is the characteristic function, for $\xi \in \hat{\fg} \cong \bR^q$,
\begin{equation}
 \chi_{N, \mu}(\xi) = \E_{\mu_{\fg}^{\otimes N}}\left[e_{\xi}(\Pi(\ux)) \right].
\end{equation}

In the case of a connected, simply connected nilpotent Lie group, the Baker-Campbell-Hausdorff formula is a finite expression that holds for all $X, Y \in \fg$,
\begin{equation}
 \log \left(\exp X \exp Y \right) = X + Y + \frac{1}{2} [X,Y] + \frac{1}{12}[X, [X,Y]] - \frac{1}{12}[Y, [X, Y]] + ....
\end{equation}
Using the Baker-Campbell-Hausdorff formula, the product rule for a sequence of group elements may be expressed as a polynomial map on the Lie algebra.  To describe this, given a sequence of elements $\ux = (x_k)_{k=1}^N$ of elements from $\bR^q$, sort a list of triples $\{(k_t, i_t, j_t)\}_{t = 1}^\ell$, $1 \leq k_t \leq N$, $1 \leq i_t \leq s$, $1 \leq j_t \leq d_{i_t}$ lexicographically. Say that the monomial 
\begin{equation}
 m_\alpha(\ux) =\prod_{t=1}^\ell \left(x_{k_t}^{(i_t, j_t)} \right)^{\alpha_{k_t}^{(i_t, j_t)}},
\end{equation}
 is \emph{initial} if it has the form 
\begin{equation}
 m_\alpha(\ux) = \prod_{k=1}^r \prod_{(i,j) \in S_k} \left(x_k^{(i,j)}\right)^{\alpha_k^{(i,j)}}
\end{equation}
where, for each $k$, $S_k$ is a non-empty subset of $\{(i,j): 1 \leq i \leq s, 1 \leq j \leq d_i\}$. If $m_\alpha$ is initial, say that monomial $m_\alpha'$ is of \emph{type} $m_\alpha$ if for some $\ell_1 < \ell_2 < \cdots < \ell_r$,
\begin{equation}
 m_\alpha'(\ux) = \prod_{k=1}^r \prod_{(i,j) \in S_k} \left(x_{\ell_k}^{(i,j)}\right)^{\alpha_k^{(i,j)}}.
\end{equation}

\begin{lemma}\label{product_lemma}
 Let $\ux = (x_1, ..., x_N)$ be a sequence of vectors from $\bR^q$ identified with coordinates on  the Lie algebra $\fg$. There are polynomials $\left\{P_N^{(i,j)}\right\}_{\substack{1 \leq i \leq s\\ 1 \leq j \leq d_i}}$ on $(\bR^q)^N$ satisfying the following conditions
 \begin{enumerate}
  \item (Degree bound) Each monomial $m_\alpha$ in $P_N^{(i,j)}$ satisfies $\homdeg(m_\alpha) \leq i$
  \item (Stability) If $m_\alpha$ appears in $P_N^{(i,j)}$ and if $M> N$ then $m_\alpha$ appears in $P_M^{(i,j)}$ with the same leading coefficient 
  \item (Invariance) If $m_\alpha'$ is of type $m_\alpha$, and if the maximum index of $m_\alpha'$ is at most $N$, then  $m_\alpha'$ appears in $P_N^{(i,j)}$ with the same leading coefficient as $m_\alpha$
 \end{enumerate}
such that the multiplication is given in coordinates by
\begin{equation}
\Pi(\ux) = \begin{pmatrix}\sum_{k=1}^N x_k^{(1,1)} + P_N^{(1,1)}(\ux)\\ \vdots \\ \sum_{k=1}^N x_k^{(s, d_s)} + P_N^{(s, d_s)}(\ux) \end{pmatrix}.
\end{equation}

\end{lemma}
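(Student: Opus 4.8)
The plan is to realize $\Pi$ as a single universal polynomial map depending only on the structure constants of $\fg$, and to read off the three properties from how this map interacts with the grading and with deleting or relabeling factors. Writing $X_k = \psi(x_k)$, iterated use of the Baker-Campbell-Hausdorff formula expresses $\psi(\Pi(\ux)) = \log(\exp X_1 \cdots \exp X_N)$ as a finite sum of iterated Lie brackets in $X_1, \ldots, X_N$ with rational coefficients; the sum is finite because $\fg$ is nilpotent of step $s$, so brackets of length exceeding $s$ vanish. Expanding each $X_k$ in the fixed basis makes every bracket term multilinear in the basis vectors it involves, so each coordinate $\Pi(\ux)^{(i,j)}$ is a polynomial in the entries of $\ux$; I would define $P_N^{(i,j)}$ by removing the linear part $\sum_{k=1}^N x_k^{(i,j)}$, which is all that survives modulo $\fg_2$, so in particular $P_N^{(1,j)} \equiv 0$. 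Nothing in this construction depends on $N$ beyond the range of summation, so the $P_N^{(i,j)}$ form a compatible family.

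The degree bound is the substantive point. I would argue on a single bracket term $c\,[X_{i_1,j_1},[X_{i_2,j_2},\ldots,X_{i_m,j_m}]]$ whose basis vectors $X_{i_t,j_t}$ are drawn from the expansions of $X_{k_1}, \ldots, X_{k_m}$: it produces the monomial $c\prod_{t=1}^m x_{k_t}^{(i_t,j_t)}$, of homogeneous degree $\sum_{t=1}^m i_t$, counted with multiplicity when some indices $k_t$ coincide. Since $X_{i_t,j_t} \in \fa_{i_t} \subseteq \fg_{i_t}$ and $[\fg_a, \fg_b] \subseteq \fg_{a+b}$, this bracket lies in $\fg_{i_1 + \cdots + i_m}$, hence contributes to level $i$ only when $i_1 + \cdots + i_m \le i$; thus every monomial of $P_N^{(i,j)}$ has homogeneous degree at most $i$. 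The one place demanding care is exactly this multiplicity bookkeeping, since a single vector $x_k$ may occur several times inside one iterated bracket.

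Stability would follow from associativity. With $w = \Pi(x_1, \ldots, x_N)$ and $v = \Pi(x_{N+1}, \ldots, x_M)$ one has $\Pi(x_1, \ldots, x_M) = w * v$, and the two-factor Baker-Campbell-Hausdorff formula gives $(w * v)^{(i,j)} = w^{(i,j)} + v^{(i,j)} + (\text{bracket corrections})$, where every correction term is an iterated bracket containing at least one copy of $v$. As a polynomial in $x_{N+1}, \ldots, x_M$, each coordinate of $v$ has vanishing constant term, so $v^{(i,j)}$ and every correction term involve at least one variable of index greater than $N$. A monomial of $P_M^{(i,j)}$ supported on indices at most $N$ must therefore come from $w^{(i,j)}$, that is, from $P_N^{(i,j)}$, and with the same coefficient.

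For invariance, fix $\ell_1 < \cdots < \ell_r \le N$ and a monomial $m_\alpha'$ of type $m_\alpha$ built on those indices. Setting $x_k = 0$ for $k \notin \{\ell_1, \ldots, \ell_r\}$ replaces the corresponding factors of $x_1 * \cdots * x_N$ by the identity, so $\Pi(x_1, \ldots, x_N)$ specializes to $\Pi(x_{\ell_1}, \ldots, x_{\ell_r})$, which as a polynomial in its $r$ vector arguments is literally $\Pi(\uy)$ with $y_t = x_{\ell_t}$. Since $m_\alpha'$ avoids every variable set to zero, its coefficient in $P_N^{(i,j)}$ is unchanged by this specialization and hence equals the coefficient of $m_\alpha$ in $P_r^{(i,j)}$; because $m_\alpha$ is initial it uses each of $x_1, \ldots, x_r$, and $r \le \ell_r \le N$, so a final appeal to stability identifies this with the coefficient of $m_\alpha$ in $P_N^{(i,j)}$. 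Thus, once $\Pi$ is set up as a universal polynomial map, stability and invariance are essentially formal, reflecting its compatibility with appending identity factors and with relabeling factors in an order-preserving way; the real work is in the grading argument for the degree bound.
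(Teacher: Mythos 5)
Your proof is correct and takes essentially the same approach as the paper: the degree bound via $[\fg_a,\fg_b]\subseteq\fg_{a+b}$, stability via the observation that all new BCH contributions involve a new index, and invariance via specializing variables to zero and relabeling. The paper organizes the whole argument as a single induction on $N$ with $\Pi(\ux)=\Pi(\ux')*x_N$, whereas you separate stability (using associativity to group $x_{N+1},\dots,x_M$ at once) from invariance (specialization followed by an appeal to stability), which is a slightly cleaner modularization of the identical underlying ideas.
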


\begin{proof}
 This follows from the Baker-Campbell-Hausdorff formula and induction.  Write $\ux'$ for the string $\ux$ with $x_N$ deleted, so that 
 \begin{equation}
  \Pi(\ux) = \Pi(\ux') * x_N.
 \end{equation}
 To obtain the degree bound, use that for $i + j \leq s+1$, $[\fg_i, \fg_j] \subset \fg_{i+j}$, so that, when taking commutators, the step in the filtration always increases at least as quickly as the homogeneous degree of the coefficient.  To obtain stability, note that if a contribution is made with a commutator involving $\psi(x_N)$ then the resulting monomial has an $x_N$ dependence, so that monomials without an $x_N$ dependence arise in $\Pi(\ux)$ only from the linear term in the Baker-Campbell-Hausdorff formula.  To prove invariance, let $m_\alpha'$ be a monomial appearing in $P_N$ which depends on $x_N$. Let the type of $m_{\alpha}'$ be $m_{\alpha}$.  Let $\tilde{\Pi}(\ux')$ be obtained from $\Pi(\ux')$ by setting to 0 all $x_j$ that do not appear in $m_{\alpha}'$ and write the remaining indices in order $\ell_1 < \ell_2 < \cdots < \ell_{r-1}$.  By induction, $\tilde{\Pi}(\ux') = \Pi(x_{\ell_1}, ..., x_{\ell_{r-1}})$ and thus the coefficients of $m_{\alpha}$ and $m_{\alpha}'$ are equal. 
\end{proof}

 Let 
 \begin{equation}
  m_\alpha = \prod_{k=1}^r \prod_{(i,j) \in S_k} \left(x_k^{(i,j)}\right)^{\alpha_k^{(i,j)}}
 \end{equation}
be an initial monomial of homogeneous degree $n$.  Given $\ux \in (\bR^q)^N$, define the generalized $U$-statistic
\begin{equation}
 U_\alpha(\ux) = \sum_{1 \leq \ell_1 < \ell_2 < \cdots < \ell_r } \prod_{k=1}^r \prod_{(i,j) \in S_k} \left(x_{\ell_k}^{(i,j)}\right)^{\alpha_k^{(i,j)}}.
\end{equation}
Lemma \ref{product_lemma} may be summarized as stating that
\begin{equation}
 \Pi^{(i,j)}(\ux) = \sum_{k = 1}^N x_k^{(i,j)} + P_N^{(i,j)}(\ux)
\end{equation}
where $P_N^{(i,j)}$ is a linear combination of generalized $U$-statistics of homogeneous degree at most $i$, with the $\ell^1$ norm of the coefficients in the linear combination bounded by a constant depending on the fixed choice of basis.
 
\subsection{Manipulations regarding the test function}\label{test_function_section}
The test function of the theorem takes the form, for $x \in \bR^q \cong \fg$,
\begin{align}
 L_gR_h f(\phi(x)) = f_{\fg}(\log g * x * \log h).
\end{align}
Applying the Baker-Campbell-Hausdorff formula, there are polynomials $p_{g,h}$ and $q_{g,h}$ satisfying for $1 \leq n \leq s$, $p_{g,h}^{(n)}, q_{g,h}^{(n)}$ are of homogeneous degree at most $n$, such that
\begin{equation}
x' = \log g * x * \log h = p_{g,h}(x), \qquad x = q_{g,h}(x'). 
\end{equation}
The relationship between $p_{g,h}^{(n)}$ and $q_{g,h}^{(n)}$ is linear in $x^{(n)}$ and ${x'}^{(n)}$ and polynomial in the lower degree coordinates.  In particular, $p$ can be obtained from $q$ by a polynomial change, and vice-versa, see \cite{GT12}, Appendix A for a further discussion.

Define the (naive) height $\Ht(p)$ of a polynomial $p$ to be the sup norm on its coefficients.  In particular, \begin{equation}\Ht(p_{g,h}) \ll 1+ \Ht(q_{g,h})^{O_s(1)}, \qquad \Ht(q_{g,h}) \ll 1 + \Ht(p_{g,h})^{O_s(1)}.\end{equation}

Let $\sigma \in C_c^\infty(\bR^q)$ be a smooth bump function with dilation, for $t > 0$, $\sigma_t(x) = t^q \sigma(tx)$.  Let $f_{\fg, t} = f_{\fg} * \sigma_t$ be the Euclidean convolution.  Since $f$ is assumed Lipschitz, \begin{equation}\left\|f_{\fg} - f_{\fg, t}\right\|_\infty= O\left( \frac{1}{t}\right)\end{equation} as $t \to \infty$.  Also, $\left\|f_{\fg, t}\right\|_1 \leq \left\|f_{\fg}\right\|_1$.
\begin{lemma}\label{Fourier_decay_lemma}
 For each $n \geq 1$ and for $\xi \in \bR^q$, $\|\xi\| \geq 1$ and $t > 1$, the Fourier transform
 \begin{equation}
  \widehat{L_gR_h f_{\fg, t}}(\xi) = \int_{\bR^q} f_{\fg, t}(\log g * x* \log h) e_{-\xi}(x) dx
 \end{equation}
satisfies
 \begin{equation}
  \left|\widehat{L_gR_h f_{\fg, t}}(\xi)\right| \leq O_{ n, D, f}(1)\left(1 + \Ht(p_{g,h}) \right)^{O_s(1)}  \left(\frac{t}{\|\xi\|}\right)^n \|f\|_1.
 \end{equation}

\end{lemma}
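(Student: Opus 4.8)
The plan is to extract all the decay by integrating by parts $n$ times in a single, well-chosen Euclidean coordinate, using the chain rule together with the triangular polynomial structure of $p_{g,h}$ to reduce everything to $L^1$ norms of derivatives of $\sigma_t$. Write $g_t(x) = f_{\fg,t}(\log g * x * \log h) = f_{\fg,t}(p_{g,h}(x))$; since $f_{\fg,t} = f_{\fg}*\sigma_t$ is smooth with compact support and $p_{g,h}$ is a polynomial diffeomorphism of $\bR^q$ with polynomial inverse $q_{g,h}$, we have $g_t \in C_c^\infty(\bR^q)$. The map $x \mapsto p_{g,h}(x)$ is the composition of a left and a right translation on $G$ read in exponential coordinates, hence preserves Haar = Lebesgue measure, so it has Jacobian $1$ (equivalently, its Jacobian matrix is block lower triangular with identity diagonal blocks). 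Since $\|\xi\| \ge 1$ there is a coordinate $(m,j)$ with $|\xi^{(m,j)}| \ge \|\xi\|/\sqrt{q} \ge q^{-1/2}$, and all the decay is taken out of this one variable.

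Integrating by parts $n$ times in $x^{(m,j)}$ (boundary terms vanish because $g_t$ is compactly supported) gives
\begin{equation}
 \widehat{L_gR_h f_{\fg,t}}(\xi) = \frac{1}{(2\pi i \xi^{(m,j)})^n}\int_{\bR^q}\left(\partial_{x^{(m,j)}}^n g_t\right)(x)\,e_{-\xi}(x)\,dx.
\end{equation}
By the Fa\`a di Bruno chain rule, $\partial_{x^{(m,j)}}^n g_t$ is a sum of $O_n(1)$ terms $(\partial^\beta f_{\fg,t})(p_{g,h}(x))\,R_\beta(x)$ with $1 \le |\beta| \le n$, where each $R_\beta$ is a sum of products of iterated $x^{(m,j)}$-derivatives of the components $p_{g,h}^{(\ell)}$. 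Differentiation in $x^{(m,j)}$ annihilates $p_{g,h}^{(\ell)}$ for $\ell < m$ and lowers homogeneous degree by $m$ for $\ell \ge m$; since $p_{g,h}^{(\ell)}$ has homogeneous degree, hence ordinary degree, at most $s$, each $R_\beta$ is a polynomial of degree $O_{n,s}(1)$ whose coefficients are bounded by $(1 + \Ht(p_{g,h}))^{O_s(1)}$.

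It remains to bound $\int |(\partial^\beta f_{\fg,t})(p_{g,h}(x))|\,|R_\beta(x)|\,dx$ for each $\beta$. For $t \ge 1$ one has $\supp \partial^\beta f_{\fg,t} = \supp(f_{\fg}*\partial^\beta \sigma_t) \subseteq K_f$, where $K_f$ is a fixed compact neighbourhood of $\supp f_{\fg}$ depending only on $f$ and the fixed $\sigma$; equivalently the integrand is supported in $q_{g,h}(K_f)$, a set of diameter $\ll_{D,f} (1+\Ht(q_{g,h}))^{O_s(1)} \ll (1+\Ht(p_{g,h}))^{O_s(1)}$ by the height relations already recorded. On this set $|R_\beta| \ll_{n,D,f} (1+\Ht(p_{g,h}))^{O_s(1)}$, so pulling this out and substituting $x' = p_{g,h}(x)$ (Jacobian $1$) leaves $\|\partial^\beta f_{\fg,t}\|_1 = \|f_{\fg}*\partial^\beta\sigma_t\|_1 \le \|f_{\fg}\|_1\|\partial^\beta\sigma_t\|_1 = t^{|\beta|}\|\partial^\beta\sigma\|_1\|f\|_1 \le O_n(t^n)\|f\|_1$, using $|\beta| \le n$ and $t \ge 1$. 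Combining the $O_n(1)$ terms with $|\xi^{(m,j)}|^{-n} \le q^{n/2}\|\xi\|^{-n}$ yields the stated bound.

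The only genuine work is the bookkeeping in the chain-rule step: checking that the polynomials $R_\beta$ really do have degree bounded in terms of $n$ and $s$ and coefficients bounded by a power of $1 + \Ht(p_{g,h})$, and — relatedly — pinning down the compact set carrying each integrand together with its diameter in terms of $\Ht(q_{g,h})$ (which one converts to $\Ht(p_{g,h})$ via the displayed height inequalities). Everything else is routine non-stationary phase / integration by parts.
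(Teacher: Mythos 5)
Your proof is correct and takes essentially the same route as the paper: integrate by parts $n$ times to pull out $\|\xi\|^{-n}$, apply the chain rule, use the compact support of $f$ together with the polynomial inverse $q_{g,h}$ to bound the polynomial factors by $(1+\Ht(p_{g,h}))^{O_s(1)}$, and bound the remaining $L^1$ norm by $t^n\|f\|_1$. The only cosmetic differences are that the paper integrates by parts in the fixed direction $\xi_0 = \xi/\|\xi\|$ rather than in a single dominant coordinate $x^{(m,j)}$, and it leaves the unit-Jacobian change of variables $x' = p_{g,h}(x)$ implicit in the phrase ``bounding the integral in $L^1$.''
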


\begin{proof}
 Let $\xi_0 = \frac{\xi}{\|\xi\|}$ and integrate by parts $n$ times in the $\xi_0$ direction to obtain
 \begin{equation}
  \widehat{L_gR_h f_{\fg, t}}(\xi) = \left(\frac{1}{2\pi i \|\xi\|} \right)^n\int_{\bR^q} D_{\xi_0}^n\left[f_{\fg, t}(\log g * x *\log h)\right] e_{-\xi}(x) dx.
 \end{equation}
Write $\log g * x * \log h = p_{g,h}(x)$ and note that for $0 \leq j \leq n$,
\begin{equation}
 \left|D_{\xi_0}^j p_{g,h}(x)\right| \leq O_{n, D}(1)\Ht(p_{g,h})\|x\|^{s-j}
\end{equation}
By the compact support of $f_{\fg}$, restrict to $x' = p_{g,h}(x) \in \supp f$ which is $O_f(1)$.  Thus \begin{equation}\|x\| = q_{g,h}(x') = O_{D, f}(\Ht(q_{g,h})) = O_{D,f}\left((1 + \Ht(p_{g,h}))^{O_s(1)} \right). \end{equation}

Meanwhile $D_{\xi_0}^j f_{\fg, t} = f_{\fg} * D_{\xi_0}^j \sigma_t$, and $\left\|D_{\xi_0}^j \sigma_t\right\|_1 \ll_j t^j$.  Hence $\left\|D_{\xi_0}^j f_{\fg, t}\right\|_1 \ll_j t^j\|f\|_1$.  The conclusion now follows on applying the chain rule and bounding the integral in $L^1$. 
\end{proof}

The following lemma based on \cite{CW01} Theorem 2 is used to restrict the translations $g,h$ in Theorem \ref{main_theorem} to those for which $p_{g,h}$ has controlled height.
\begin{lemma}\label{small_values_lemma}
 Let $p: \bR^q \to \bR^q$ be a polynomial if degree at most $s$.  There is a constant $C = C(q, s)> 0$ such that, for any $\alpha > 0$,
 \begin{equation}
  \meas\left\{x \in \left[-\frac{1}{2}, \frac{1}{2}\right]^q: \|p(x)\| \leq \alpha\right\} \leq \frac{C \alpha^{\frac{1}{s}}}{\Ht(p)^{\frac{1}{s}}}.
\end{equation}
\end{lemma}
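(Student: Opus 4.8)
The plan is to reduce the multidimensional statement to a one-dimensional Remez-type inequality. First I would observe that if $\|p(x)\| \le \alpha$ then in particular $|p_\ell(x)| \le \alpha$ for every coordinate function $p_\ell$, $1 \le \ell \le q$, of $p$. So it suffices to find a single coordinate $\ell$ for which $p_\ell$ has reasonably large height and then control $\meas\{x \in [-\tfrac12,\tfrac12]^q : |p_\ell(x)| \le \alpha\}$. Since $\Ht(p) = \max_\ell \Ht(p_\ell)$, choose $\ell$ achieving the maximum, so $\Ht(p_\ell) = \Ht(p)$ and $\deg p_\ell \le s$.

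Next I would invoke the one-variable/several-variable sublevel-set estimate of Carbery--Wright \cite{CW01}, Theorem 2: for a real polynomial $P$ on $\bR^q$ of degree at most $s$ and a convex body (here the unit cube), one has a bound of the form
\begin{equation}
 \meas\left\{x \in \left[-\tfrac12,\tfrac12\right]^q : |P(x)| \le \alpha\right\} \le C(q,s)\, \left(\frac{\alpha}{\|P\|}\right)^{\frac{1}{\deg P}},
\end{equation}
where $\|P\|$ is any fixed norm on the coefficient vector of $P$ (all such norms on the finite-dimensional space of degree-$\le s$ polynomials are equivalent up to constants depending only on $q,s$). Applying this with $P = p_\ell$ and using $\deg p_\ell \le s$ together with $\alpha/\|p_\ell\| \le 1$ being the relevant regime (if $\alpha \ge \Ht(p)$ the bound is trivial since the cube has measure $1$), and absorbing the norm-equivalence constant into $C$, gives exactly
\begin{equation}
 \meas\left\{x \in \left[-\tfrac12,\tfrac12\right]^q : \|p(x)\| \le \alpha\right\} \le \frac{C\,\alpha^{\frac{1}{s}}}{\Ht(p)^{\frac{1}{s}}}.
\end{equation}

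The main point to be careful about — and what I expect is the only real obstacle — is matching the exponent: Carbery--Wright gives $(\alpha/\|p_\ell\|)^{1/\deg p_\ell}$, and since $\deg p_\ell$ can be strictly less than $s$, one must check that replacing $1/\deg p_\ell$ by the smaller exponent $1/s$ is legitimate in the regime $\alpha/\Ht(p) \le 1$, which it is, since $t \mapsto t^{1/d}$ is increasing in $d$ for $t \in (0,1]$; the boundary case $t > 1$ is handled by the trivial bound as noted. A secondary technical point is to confirm that the Carbery--Wright hypotheses are met by our choice of convex body and that the normalization of the coefficient norm there matches $\Ht(\cdot)$ up to constants depending only on $q$ and $s$, which follows from equivalence of norms on a fixed finite-dimensional space; all constants produced depend only on $q$ and $s$, as required.
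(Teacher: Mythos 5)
Your proof is correct and follows essentially the same route as the paper: invoke \cite{CW01} Theorem 2 and then appeal to equivalence of norms on the finite-dimensional space of degree-$\leq s$ polynomials in $q$ variables, with the trivial case $\alpha \gtrsim \Ht(p)$ handled by $\meas \leq 1$. The only cosmetic difference is that you first pass to a single coordinate function $p_\ell$ of maximal height (which is why you need the additional monotonicity check $t^{1/\deg p_\ell}\leq t^{1/s}$ for $t\in(0,1]$), whereas the paper applies the Carbery--Wright sublevel estimate to the vector-valued $p$ directly, obtaining the intermediate bound with $\bigl(\int_{[-1/2,1/2]^q}\|p\|^2\bigr)^{1/(2s)}$ in the denominator before invoking norm equivalence once.
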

\begin{proof}
 The statement 
  \begin{equation}
  \meas\left\{x \in \left[-\frac{1}{2}, \frac{1}{2}\right]^q: \|p(x)\| \leq \alpha\right\} \leq \frac{C \alpha^{\frac{1}{s}}}{\left(\int_{\left[-\frac{1}{2}, \frac{1}{2}\right]^q} \|p\|^2\right)^{\frac{1}{2s}}}. 
 \end{equation}
 is a specialization of \cite{CW01} Theorem 2.  The conclusion follows since all norms on the space of degree $s$ polynomials on $\bR^q$ are equivalent.
\end{proof}

\section{Rearrangement group action}\label{action_section}
In \cite{DH15} Diaconis and the author used the following group action on strings. 
The group $C_2^{n-1}$ acts on strings of length $kn$ with the $j$th factor exchanging the relative order of the $j+1$st block of length $k$  with the previous $jk$ elements.  For instance, in the case $n = 4$, if $x_1, ..., x_4$ each represent a block of $k$ indices, the action is illustrated in
\begin{align}
 (1,0,0) \cdot \ux &= x_2 x_1 x_3x_4\\
 \notag (1,1,0) \cdot \ux &= x_3 x_2x_1x_4\\
 \notag (0,1,1) \cdot \ux &= x_4 x_3 x_1 x_2.
\end{align}
The relative order within the segments of length $k$ in each $x_i$ remains unchanged.

For $n \geq 2$, $k \geq 1$ and $1\leq N' \leq \left \lfloor \frac{N}{kn} \right \rfloor$ let \begin{equation}A_{k,n}^{N'} = \left(C_2^{n-1}\right)^{N'}\end{equation} act on strings of length $knN'$ with, for $j \geq 1$, the $j$th factor of $C_2^{n-1}$ in $A_{k,n}^{N'}$ acting as above on the contiguous subsequence of indices of length $k n$ ending at $j k n$.  The argument below considers $A_{k,n}^{N'}$ acting on substrings of length $k n N'$ within a string of length $N$.

\subsection{The Gowers-Cauchy-Schwarz inequality}
Given two elements \begin{equation}\utau_0, \utau_1 \in A_{k,n}^{N'}=\left(C_2^{n-1}\right)^{N'} = \left(C_2^{N'}\right)^{n-1}\end{equation} write $\utau = \left(\tau^{(1)}, \cdots, \tau^{(n-1)}\right)$. Thus $\tau_{0, i}^{(j)}$ is the element in $C_2$ which belongs to the $i$th factor of $(C_2^{n-1})$ in $A_{k,n}^{N'}$ and within this factor, the $j$th factor of $C_2$.  Given $s \in \{0,1\}^{n-1} \cong C_2^{n-1}$, define $\utau_s = \left(\tau_{s_1}^{(1)}, \cdots, \tau_{s_{n-1}}^{(n-1)} \right) \in A_{k,n}^{N'}$.

Since the increments of $\mu_{\fg}$ in the characteristic function $\chi_{N, \mu}$ are i.i.d., a further averaging may be introduced in which the group $A_{k,n}^{N'}$ acts on a substring of the product measure.  In general, let $P(\ux)$ be a continuous function of $\ux$ and let its characteristic function be
\begin{equation}
 \chi(\xi) = \E_{\mu_{\fg}^{\otimes N}}\left[e_\xi(P(\ux)) \right].
\end{equation}
Then
\begin{align}
 \chi(\xi) = \E_{\mu_{\fg}^{\otimes N}}\left[\E_{\utau \in A_{k,n}^{N'}}\left[e_{\xi}\left(P(\utau \cdot \ux) \right) \right] \right].
\end{align}
Writing $\utau = (\utau^{(1)}, \utau^{(2)}, ..., \utau^{(n-1)})$,
\begin{align}
 \chi(\xi) = \E_{\mu_{\fg}^{\otimes N}}\left[\E_{\utau^{(1)} \in C_2^{N'}} \cdots \E_{\utau^{(n-1)} \in C_2^{N'}}\left[e_{\xi}\left(P(\utau \cdot \ux) \right) \right] \right].
\end{align}
Moving one $\utau^{(i)}$ to the inside at a time and applying Cauchy-Schwarz to the inner expectation recovers the Gowers-Cauchy-Schwarz inequality:
\begin{align}
 \left|\chi(\xi)\right|^{2^{n-1}} \leq \E_{\mu_{\fg}^{\otimes N}}\left[\E_{\utau_0, \utau_1 \in A_{k,n}^{N'}} \left[e_\xi\left(\sum_{s \subset [n-1]} (-1)^{|s|} P(\utau_s \cdot \ux) \right) \right] \right].
\end{align}
In the case $\chi(\xi) = \chi_{N, \mu}(\xi)$, denote the right hand side $F\left(\xi, \mu; A_{k,n}^{N'}\right)$.

A basic lemma, which generalizes Lemma 24 of \cite{DH15}, is as follows.
\begin{lemma}\label{alternating_sum_lemma}
 Let $N, N' \geq 1$, let $k \geq 1$ and $n \geq 2$ be such that $k nN' \leq N$.  Let $\ux \in (\bR^q)^N$ and let $A_{k,n}^{N'}$ act on the substring of $\ux$ with indices in range, for some offset $o \geq 0$, $[o+1, o+knN']$. Define for $1 \leq i \leq nN'$, \begin{equation}\omega_i = \sum_{j=(i-1)k+1}^{ik}x_{o+j}.\end{equation}  For any $\utau_0, \utau_1 \in A_{k,n}^{N'}$ the summation formula holds,
 \begin{align}\label{summation_formula}
  \sum_{s \subset \{0,1\}^{n-1}}(-1)^{|s|} \Pi^{(n)}(\utau_s \cdot \ux) &= \sum_{i = 1}^{N'} \left(\sum_{s \in \{0,1\}^{n-1}} (-1)^{|s|}\Pi^{(n)}(\tau_{s, i}  \cdot (\omega_{n(i-1)+1}, ..., \omega_{ni}))  \right)
  \end{align}
  while for all $ n' < n$,
  \begin{align}\label{lower_phase_cancels} \sum_{s \subset \{0,1\}^{n-1}}(-1)^{|s|} \Pi^{(n')}(\utau_s \cdot \ux) &= 0.
 \end{align}
Moreover, 
\begin{align}\label{single_block_sum}&\sum_{s \in \{0,1\}^{n-1}} (-1)^{|s|}\Pi^{(n)}(\tau_{s, i}  \cdot (\omega_{1}, ..., \omega_{n}))\\&\notag = \left\{ \begin{array}{lll} (-1)^{|\tau_{0,i}|} \sum_{\tau \in C_2^{n-1}} (-1)^{|\tau|} \Pi^{(n)} \left(\tau \cdot  (\omega_{1}, ..., \omega_{n})\right) && \tau_{0, i} + \tau_{1, i} = (1)^{n-1}\\\\ 0 && \text{ otherwise}\end{array}\right..\end{align} 

In the $(1)^{n-1}$ case, the sum is a vector whose coordinates are non-zero multilinear polynomials in 
$ \left(\omega_1^{(1)}, \omega_2^{(1)}, \cdots, \omega_{n}^{(1)}\right)$.
\end{lemma}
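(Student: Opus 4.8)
The plan is to reduce the whole statement, via Lemma~\ref{product_lemma}, to the combinatorics of how $A_{k,n}^{N'}$ reorders the length-$k$ sub-blocks, and then to a signed count over the cube $\{0,1\}^{n-1}$. For any $n'\le n$ write, by Lemma~\ref{product_lemma}, $\Pi^{(n')}(\uy)=\sum_k y_k^{(n')}+P_N^{(n')}(\uy)$ with $P_N^{(n')}$ a combination of generalized $U$-statistics of homogeneous degree $\le n'$, and expand each $U$-statistic over its index tuples. Since $A_{k,n}^{N'}$ only permutes the increments, $\sum_k(\utau_s\cdot\ux)_k^{(n')}$ is independent of $s$, so the linear term contributes $\bigl(\sum_s(-1)^{|s|}\bigr)(\cdots)=0$ because $n\ge2$. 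For a $U$-statistic, attaching its $t$-th one-variable factor to an index $b_t$, the coefficient of the resulting monomial in $\sum_s(-1)^{|s|}U_\alpha(\utau_s\cdot\ux)$ is the signed count $W(b_1,\dots,b_r)=\sum_s(-1)^{|s|}\one[\text{after }\utau_s,\ x_{b_1},\dots,x_{b_r}\text{ occupy increasing positions}]$. Because $A_{k,n}^{N'}$ fixes every position outside the window $[o+1,o+knN']$, fixes each super-block (block of $kn$ consecutive indices) setwise, and preserves the internal order of each length-$k$ sub-block, whether $W$ vanishes depends only on the set $O_i\subseteq\{1,\dots,n\}$ of sub-blocks of super-block $i$ that some $b_t$ falls into.

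The crux is a cancellation statement. I would record the explicit action of $C_2^{n-1}$ on $n$ sub-blocks: the element with active set $J=\{j+1:\epsilon_j=1\}$ reorders them as $[J\text{ decreasing}],[\text{sub-block }1],[\{2,\dots,n\}\setminus J\text{ increasing}]$ --- to be checked against the examples displayed before the lemma. From this, the order of $O_i$ is unchanged by flipping the coordinate $\epsilon_{j_0}^{(i)}$ iff $j_0+1\notin O_i$ or $O_i\cap\{1,\dots,j_0\}=\emptyset$ (so $\epsilon_1^{(i)}$ is irrelevant whenever $1\notin O_i$). Call $j_0$ \emph{dead} if the order of $O_i$ is $\epsilon_{j_0}^{(i)}$-independent for every $i$; for a dead $j_0$ the pairing $s\leftrightarrow s\oplus e_{j_0}$ gives $W=0$. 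A counting argument on $\sum_i|O_i|\le r\le\deg m_\alpha\le\homdeg m_\alpha\le n$ then shows that if \emph{no} $j_0$ is dead one is forced to have $\sum_i|O_i|=n$ with all the $b_t$ in one super-block, one per sub-block, and $\deg m_\alpha=\homdeg m_\alpha=n$ forcing every factor to be a level-$1$ coordinate of exponent $1$. So $W\ne0$ only for such ``top'' monomials. This yields \eqref{lower_phase_cancels} at once (a monomial of $\Pi^{(n')}$ has degree $\le n'<n$), and it presents $\sum_s(-1)^{|s|}\Pi^{(n)}(\utau_s\cdot\ux)$ as a sum over $i$ of the contributions of top monomials supported in super-block $i$; summing over the index choices inside each sub-block and using the invariance and stability in Lemma~\ref{product_lemma}, this contribution is the top (multilinear, degree-$n$) part of $\sum_s(-1)^{|s|}\Pi^{(n)}(\tau_{s,i}\cdot(\omega_{n(i-1)+1},\dots,\omega_{ni}))$, the remainder of the latter --- the $s$-invariant $\sum_t\omega_t^{(n)}$ and the lower-degree terms --- being killed by the same cancellation. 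This is \eqref{summation_formula}.

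For \eqref{single_block_sum}, fix $\tau_{0,i},\tau_{1,i}\in C_2^{n-1}$: if they agree in a coordinate $j_0$ then $\tau_{s,i}$ is invariant under $s\mapsto s\oplus e_{j_0}$ while $(-1)^{|s|}$ flips, so the sum vanishes; if $\tau_{0,i}+\tau_{1,i}=(1)^{n-1}$ then $s\mapsto\tau_{s,i}=\tau_{0,i}\oplus s$ is a bijection onto $C_2^{n-1}$ with $(-1)^{|s|}=(-1)^{|\tau_{0,i}|}(-1)^{|\tau_{s,i}|}$ (as $|a\oplus b|\equiv|a|+|b|\bmod2$), giving the stated identity. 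For the non-vanishing I would compute $\sum_{\tau\in C_2^{n-1}}(-1)^{|\tau|}\Pi^{(n)}(\tau\cdot(\omega_1,\dots,\omega_n))$ by induction on $n$: splitting off the coordinate $\epsilon_{n-1}$ and using that the only level-$n$ term of the Baker--Campbell--Hausdorff expansion that is multilinear of degree $n$ and linear in $\omega_n^{(1)}$ comes from a single bracket $[\,\cdot\,,\omega_n^{(1)}]$, one finds the sum equals $\bigl[\sum_{\tau'\in C_2^{n-2}}(-1)^{|\tau'|}\Pi^{(n-1)}(\tau'\cdot(\omega_1,\dots,\omega_{n-1}))^{\mathrm{top}},\,\omega_n^{(1)}\bigr]$; by induction (base case $n=2$, which is $[\omega_1^{(1)},\omega_2^{(1)}]$, as in \cite{DH15}) this equals the left-normed iterated bracket $[[\cdots[\omega_1^{(1)},\omega_2^{(1)}],\omega_3^{(1)}],\cdots,\omega_n^{(1)}]$, projected to $\fa_n$. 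It is manifestly multilinear in $\omega_1^{(1)},\dots,\omega_n^{(1)}$, and it is not identically zero: the associated graded $\mathrm{gr}\,\fg=\bigoplus_i\fg_i/\fg_{i+1}$ is generated in degree $1$, so left-normed brackets of degree-$1$ elements span $\fg_n/\fg_{n+1}\cong\fa_n$, which is nonzero precisely when level-$n$ coordinates exist.

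The delicate step is the cancellation: pinning down the explicit sub-block reordering, handling the indices lying outside the window and the case $1\notin O_i$, and then running the counting argument on the $O_i$ to isolate precisely the top monomials. The inductive identification of the alternating sum with a single left-normed bracket is the other point requiring a careful Baker--Campbell--Hausdorff degree count, although only the weaker statement that the sum is a nonzero multilinear Lie element in $\omega_1^{(1)},\dots,\omega_n^{(1)}$ is actually needed.
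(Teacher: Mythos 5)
Your proof is correct and follows essentially the same route as the paper's: use the invariance/stability properties from Lemma~\ref{product_lemma} to reduce the alternating sum to a signed count over $\{0,1\}^{n-1}$, pair off via a ``dead'' coordinate to kill all monomials except those of degree and homogeneous degree exactly $n$ with one level-$1$ variable in each sub-block of a single super-block, and identify the surviving multilinear polynomial with a left-normed iterated bracket, which is nonzero because such brackets generate $\fg_n/\fg_{n+1}$. The only notable stylistic difference is in the last step: you work entirely in the Lie algebra, inducting on $n$ via a Baker--Campbell--Hausdorff degree count and invoking that the associated graded algebra is generated in degree $1$, whereas the paper exponentiates to $g_i=\exp\omega_i$ (taking $k=1$), recognizes the surviving polynomial as the coordinate of $\log[\cdots[[g_1,g_2],g_3],\dots,g_n]$ in $G_n/G_{n+1}$, and uses that such group commutators generate $G_n$; these are the same observation seen on the two sides of $\exp$. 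Your explicit ``dead bit'' counting $\sum_i(|O_i|-1)\ge n-1$ forcing a single fully-occupied super-block is a careful spelling-out of what the paper asserts in a single sentence (``this occurs if $m<n$, or if the indices are not acted on by the same factor\ldots''), and it correctly accounts for indices outside the window.
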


\begin{proof}
 By the degree bound, all monomials appearing in $\Pi^{(n,j)}$ have homogeneous degree at most $n$, and hence degree at most $n$.  Given any collection of $m \leq n$ indices $k_1 < k_2 < \cdots < k_m$, if there is any bit $b \in C_2^{n-1}$ of Hamming weight 1 such that $\utau_s$ and $\utau_{s + b}$ act on $k_1 , ..., k_m$ leaving them in the same relative order, then by the invariance principle any monomials associated to these indices in the alternating sum 
\begin{equation}
 \sum_{s \subset \{0,1\}^{n-1}}(-1)^{|s|} \Pi^{(n, j)}(\utau_s \cdot \ux)
\end{equation}
cancel.  In particular, this occurs if $m < n$, or if the indices $k_1 < k_2 < \cdots < k_m$ are not acted on by the same factor of $C_2^{n-1}$ in $A_{k,n}^{N'}$, or if $k_1, k_2, ..., k_n$ do not appear in distinct blocks in the action, or if the corresponding factor of $\utau_0$ and $\utau_1$ do not add to the all 1's element. In particular this proves (\ref{lower_phase_cancels}).

Since the only surviving monomials have degree $n$ and homogeneous degree $n$, the surviving variables are all from the first level of the filtration $x^{(1)}$ and all of the monomials are linear in each variable.  By the invariance principle, the surviving polynomial is in fact a polynomial on the sums $\omega_1, \omega_2, ..., \omega_n$.  

The formula (\ref{single_block_sum}) is immediate.  To prove that the $(1)^{n-1}$ case of (\ref{single_block_sum}) is non-vanishing, in the case $k = 1$ let $g_i = \exp(\omega_i) \in G$.  The sum
\begin{equation}\label{poly_formula}
  \sum_{\tau \in C_2^{n-1}} (-1)^{|\tau|} \Pi^{(n,j)} \left(\tau \cdot  (\omega_{1}, ..., \omega_{n})\right) 
\end{equation}
is equal to the $X^{(n,j)}$ coordinate in the logarithm of the iterated commutator
\begin{equation}
\label{commutator_formula} \left[     \cdots    \left[\left[\left[ g_1, g_2\right], g_3\right], g_4\right], \cdots, g_{n} \right].
\end{equation}
To verify this by induction, note that the commutator may be calculated in $G_n/G_{n+1}$, which is abelian, and depends only on $g_{n}$ in $G_1/G_2$, so that the calculation may be performed using first commutators in the Lie algebra.

Since commutators of the type (\ref{commutator_formula}) generate $G_n$, it follows that (\ref{poly_formula}) is non-zero.
 \end{proof}
 
Given probability measure $\mu$ on $G$, let $\mu_{n}$ be the probability measure on $G_n/G_{n+1}$ with distribution 
 \begin{equation}
  \left[\cdots  \left[ \left[g_1, g_2\right], g_3 \right], \cdots, g_{n}\right], \qquad g_i \text{ i.i.d. } \mu.
 \end{equation}
Thus $\mu_1 = \mu_{\ab}$, and for $n \geq 2$, $\mu_{n}$ has distribution given by  
\begin{equation}\sum_{\tau \in C_2^{n-1}} (-1)^{|\tau|} \Pi^{(n)} \left(\tau \cdot  (\omega_{1}, ..., \omega_{n})\right)\end{equation} in which the $\omega_i$ are drawn i.i.d. from  $\mu_{\fg}$.
Given $\xi^{(n)} \in \widehat{G_n /G_{n+1}} \cong \widehat{\fg_n/\fg_{n+1}}$ denote the characteristic function of $\mu_n$ by
\begin{equation}
 F_{n, \mu}\left(\xi^{(n)}\right) = \hat{\mu}_n\left(\xi^{(n)}\right).
\end{equation}
\begin{lemma}\label{gcs_lemma}
 Let $2 \leq n \leq s$, $k, N' \geq 1$, and let $N \geq knN'$.  Let $A_{k,n}^{N'}$ act on a substring of $\ux \in (\bR^q)^N$ as above.  Let $\xi \in \hat{\fg}$ satisfy $\xi^{(j)} = 0$ for all $j > n$.  Then
 \begin{equation}
  F\left(\xi, \mu; A_{k,n}^{N'}\right) = \left(1 - \frac{1}{2^{n-1}} + \frac{\RE \left[F_{n, \mu^{*k}}\left(\xi^{(n)}\right)\right]}{2^{n-1}} \right)^{N'}.
 \end{equation}

\end{lemma}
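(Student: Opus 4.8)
The plan is to unwind the definition of $F\!\left(\xi, \mu; A_{k,n}^{N'}\right)$, which is the right-hand side of the Gowers-Cauchy-Schwarz inequality applied to $P = \Pi$, using Lemma~\ref{alternating_sum_lemma} to reduce the phase to a sum of independent single-block contributions, evaluating the single-block expectation explicitly, and extracting the $N'$-th power by independence. First I would write $\xi \cdot \Pi = \sum_{n'} \xi^{(n')}\cdot \Pi^{(n')}$ and use the hypothesis $\xi^{(j)} = 0$ for $j > n$ together with~\eqref{lower_phase_cancels}, which annihilates $\sum_s (-1)^{|s|}\Pi^{(n')}(\utau_s\cdot\ux)$ for every $n' < n$. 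This collapses the phase inside $F$ to $\xi^{(n)}\cdot\sum_s(-1)^{|s|}\Pi^{(n)}(\utau_s\cdot\ux)$, and then the summation formula~\eqref{summation_formula} rewrites it as $\sum_{i=1}^{N'}\xi^{(n)}\cdot\big(\sum_s(-1)^{|s|}\Pi^{(n)}(\tau_{s,i}\cdot(\omega_{n(i-1)+1},\ldots,\omega_{ni}))\big)$, where $\omega_\ell$ is the Euclidean sum of the $\ell$-th block of $k$ consecutive increments inside the acted-on substring. The increments outside that substring do not occur in this phase and integrate to $1$; for distinct $i$ the vectors $(\omega_{n(i-1)+1},\ldots,\omega_{ni})$ involve disjoint increments while the pairs $(\utau_{0,i},\utau_{1,i})$ are the independent factors of $(\utau_0,\utau_1)$, so the $i$-th summand forms an independent, identically distributed system. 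By Fubini this gives $F\!\left(\xi,\mu;A_{k,n}^{N'}\right) = \big(\E[e_{\xi^{(n)}}(\Xi)]\big)^{N'}$, where $\Xi = \sum_s(-1)^{|s|}\Pi^{(n)}(\tau_{s,1}\cdot(\omega_1,\ldots,\omega_n))$, the expectation being taken over $(\utau_{0,1},\utau_{1,1})$ uniform on $C_2^{n-1}\times C_2^{n-1}$ and over $\omega_1,\ldots,\omega_n$ i.i.d., each a sum of $k$ i.i.d.\ draws from $\mu_\fg$.

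Next I would evaluate this single-block expectation using~\eqref{single_block_sum}. With probability $1 - 2^{-(n-1)}$ one has $\tau_{0,1}+\tau_{1,1}\neq(1)^{n-1}$, so $\Xi = 0$ and the contribution is $1$; with probability $2^{-(n-1)}$ one has $\tau_{0,1}+\tau_{1,1}=(1)^{n-1}$, in which case $\Xi = (-1)^{|\tau_{0,1}|}W$ with $W = \sum_{\tau\in C_2^{n-1}}(-1)^{|\tau|}\Pi^{(n)}(\tau\cdot(\omega_1,\ldots,\omega_n))$ and, conditionally, $\tau_{0,1}$ still uniform on $C_2^{n-1}$. Since $n\ge 2$ the parity $|\tau_{0,1}|$ is an unbiased bit, so averaging over $\tau_{0,1}$ replaces $e_{\xi^{(n)}}((-1)^{|\tau_{0,1}|}W)$ by $\RE\, e_{\xi^{(n)}}(W)$, and then averaging over the $\omega_i$ gives $\RE\,\E[e_{\xi^{(n)}}(W)]$. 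To identify $\E[e_{\xi^{(n)}}(W)]$ with $F_{n,\mu^{*k}}(\xi^{(n)})$, I would invoke the last assertion of Lemma~\ref{alternating_sum_lemma}: $W$ is a fixed multilinear polynomial in the first-level coordinates $\omega_1^{(1)},\ldots,\omega_n^{(1)}$, and this is exactly the expression defining the law of $\mu_n$; moreover each $\omega_i^{(1)}$, being a sum of $k$ i.i.d.\ draws from $\mu_{\ab}$, is distributed as the abelianization of $\mu^{*k}$, which is the first-level marginal of $(\mu^{*k})_\fg$. Hence $W$ is distributed as $\mu_n$ built from the measure $\mu^{*k}$, so $\E[e_{\xi^{(n)}}(W)] = F_{n,\mu^{*k}}(\xi^{(n)})$. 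Combining the two cases, the single-block expectation equals $1 - 2^{-(n-1)} + 2^{-(n-1)}\RE\, F_{n,\mu^{*k}}(\xi^{(n)})$, and raising it to the $N'$-th power gives the stated identity.

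The one point that requires care is this last identification: the block weights $\omega_i$ are Euclidean sums of group increments rather than group products, so a priori $\omega_i$ is not distributed as $(\mu^{*k})_\fg$. This is harmless precisely because the surviving alternating sum has homogeneous degree $n$ and, being of degree $n$, is multilinear in the degree-one coordinates alone, on which group multiplication agrees with Euclidean addition; I would emphasize this when appealing to the final sentence of Lemma~\ref{alternating_sum_lemma} and to the product rule for the first level. Beyond that, the proof is a direct bookkeeping of the block structure inherited from the rearrangement action, and I expect no genuine obstacle.
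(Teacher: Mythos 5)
Your proposal is correct and follows essentially the same route as the paper's (very terse) proof: factor through the product structure of $A_{k,n}^{N'}$ via the summation formula, condition on whether $\tau_{0,i}+\tau_{1,i}=(1)^{n-1}$, and take the real part from the unbiased parity of $|\tau_{0,i}|$. Your closing remark---that the $\omega_i$ are Euclidean sums rather than group products, and that this is harmless because the surviving alternating sum depends only on the degree-one coordinates, on which group multiplication and Euclidean addition agree---is a genuine subtlety the paper leaves implicit, and is worth spelling out exactly as you did.
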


\begin{proof}
 The expectation factors through the product structure of $A_{k,n}^{N'}$ due to the summation formula (\ref{summation_formula}).  In the expectation over $A_{k,n}^{N'}$ the probability that $\tau_{0, j} + \tau_{1, j} = (1)^{n-1}$ is $\frac{1}{2^{n-1}}$, and conditioned on this, the expectation over the corresponding block is $(-1)^{|\tau_{0,i}|} F_{n,\mu^{*k}}\left(\xi^{(n)}\right)$.  The real part occurs since conditionally, $|\tau_{0,i}|$ has parity 0 and 1 with equal probability.
\end{proof}

\subsection{The Cram\'{e}r condition}
A probability measure $\nu$ on $\bR^m$ has characteristic function $\hat{\nu}$ satisfying the Cram\'{e}r condition if there exists $0 < \epsilon < 1$ such that
\begin{equation}
 \sup_{\xi \in \bR^m, \|\xi\|> 1} |\hat{\nu}(\xi)| \leq 1-\epsilon.
\end{equation}
The condition is equivalent to the statement, for all $r >0$ there exists $0 < \epsilon(r) < 1$ such that
\begin{equation}
 \sup_{\xi \in \bR^m, \|\xi\| > r} |\hat{\nu}(\xi)| \leq 1-\epsilon(r).
\end{equation}
The equivalence may be checked by noting $1 - |\hat{\nu}(\xi_1 + \xi_2)| \leq 2(2 - |\hat{\nu}(\xi_1)| - |\hat{\nu}(\xi_2)|)$, see \cite{TV06} p. 183, where the proof does not use that the probability measure has finite support.

\begin{lemma}\label{cramer_lemma}
 Let $\mu$ be a centered probability measure on $G$ satisfying $G = \overline{\left \langle \supp \mu \right \rangle}$, whose abelianization $\mu_{\ab}$ has characteristic function satisfying the Cram\'{e}r condition.  For each $2 \leq n \leq s$ the measure $\mu_{n}$ on  $G_n/G_{n+1}$  has characteristic function satisfying the Cram\'{e}r condition.
\end{lemma}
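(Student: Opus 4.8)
The plan is to reduce the assertion to the Cram\'{e}r condition for $\mu_{\ab}$ itself. By Lemma~\ref{alternating_sum_lemma} and the description of $\mu_n$ preceding Lemma~\ref{gcs_lemma}, $\mu_n$ is the push-forward of $\mu_{\ab}^{\otimes n}$ under the iterated bracket
\[
 \beta:\ \fa_1^{\,n}\to\fg_n/\fg_{n+1},\qquad \beta(v_1,\dots,v_n) = [\cdots[[v_1,v_2],v_3],\dots,v_n]\bmod\fg_{n+1},
\]
which is multilinear and whose image linearly spans $\fg_n/\fg_{n+1}$ --- the latter being the Lie-algebra form of the fact, established in the proof of Lemma~\ref{alternating_sum_lemma}, that iterated commutators of this shape generate $G_n$. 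Identifying $G_n/G_{n+1}$ with $\fg_n/\fg_{n+1}$, we then have, for $\xi\in\widehat{\fg_n/\fg_{n+1}}$, that $\hat\mu_n(\xi)=\E[e_{-\xi}(\beta(\omega_1,\dots,\omega_n))]$ with $\omega_1,\dots,\omega_n$ i.i.d.\ $\mu_{\ab}$.

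First I would linearize in a single variable. Since $\beta$ is linear in $v_1$, there is a multilinear map $\ell_\xi:\fa_1^{\,n-1}\to\hat\fa_1$, depending linearly on $\xi$, with $\xi\cdot\beta(v_1,\dots,v_n)=\ell_\xi(v_2,\dots,v_n)\cdot v_1$; moreover $\ell_\xi\not\equiv 0$ whenever $\xi\neq 0$, precisely because the image of $\beta$ spans $\fg_n/\fg_{n+1}$. Conditioning on $\omega_2,\dots,\omega_n$ and integrating $\omega_1$ out gives
\[
 |\hat\mu_n(\xi)|\ \le\ \E_{\omega_2,\dots,\omega_n}\big[\,|\hat\mu_{\ab}(\ell_\xi(\omega_2,\dots,\omega_n))|\,\big].
\]
Since $|\hat\mu_{\ab}|\le 1$ everywhere and $|\hat\mu_{\ab}(\eta)|\le 1-\epsilon(\rho)$ for $\|\eta\|\ge\rho$ (the ``for every $r$'' form of the Cram\'{e}r condition), it suffices to produce constants $\delta,\rho>0$ with
\[
 \Prob_{\mu_{\ab}^{\otimes(n-1)}}\big[\,\|\ell_{\xi_0}(\omega_2,\dots,\omega_n)\|\ge\rho\,\big]\ \ge\ \delta\qquad\text{uniformly for }\xi_0\text{ in the unit sphere }S\subset\widehat{\fg_n/\fg_{n+1}}.
\]
Granting this, for $\|\xi\|>1$ one splits the expectation over the event in the last display, on which $\|\ell_\xi(\omega)\|=\|\xi\|\,\|\ell_{\xi_0}(\omega)\|\ge\rho$ with $\xi_0=\xi/\|\xi\|$, obtaining $|\hat\mu_n(\xi)|\le 1-\delta\epsilon(\rho)<1$, which is the Cram\'{e}r condition for $\mu_n$.

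The main work, and the main obstacle, is the uniform lower bound in the last display. For a \emph{fixed} $\xi_0\in S$ it is easy: the hypothesis $G=\overline{\langle\supp\mu\rangle}$ forces $\supp\mu_{\ab}$ to span $\fa_1\cong G_{\ab}$, since a subgroup contained in a proper subspace cannot be dense; combined with the multilinearity of $\ell_{\xi_0}$ and with $\ell_{\xi_0}\not\equiv 0$, this yields vectors $v_2,\dots,v_n\in\supp\mu_{\ab}$ with $\ell_{\xi_0}(v_2,\dots,v_n)\neq 0$. To make this uniform I would use that $(\eta,v_2,\dots,v_n)\mapsto\ell_\eta(v_2,\dots,v_n)$ is continuous (indeed polynomial) and that $\mu_{\ab}$ charges every neighborhood of a point of its support: around each $\xi_0\in S$ this gives an open neighborhood $W_{\xi_0}$ of $\xi_0$ in $S$, neighborhoods $V_i\ni v_i$, and $\rho_{\xi_0}>0$ with $\|\ell_\eta(\omega_2,\dots,\omega_n)\|\ge\rho_{\xi_0}$ for all $\eta\in W_{\xi_0}$ and $\omega_i\in V_i$, so the probability in question is at least $\prod_{i=2}^n\mu_{\ab}(V_i)>0$ on $W_{\xi_0}$. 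Compactness of $S$ then produces a finite subcover and uniform constants $\rho=\min_j\rho_{\xi_0^{(j)}}$ and $\delta=\min_j\prod_i\mu_{\ab}(V_i^{(j)})$, completing the argument. The only real subtlety is the passage from the pointwise non-vanishing of $\ell_{\xi_0}$ (which is free) to a bound that is uniform over the sphere and quantitative in both $\rho$ and the probability $\delta$, and this is exactly what the compactness step above accomplishes.
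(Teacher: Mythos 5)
Your proof follows essentially the same route as the paper's: write the iterated commutator as a multilinear pairing, condition on all but one variable, integrate that variable out against $\mu_{\ab}$, and combine the ``for every $r$'' form of the Cram\'er condition for $\mu_{\ab}$ with positive probability that the conditional argument has norm bounded below. The one place you add something is the explicit compactness argument over the unit sphere in $\widehat{\fg_n/\fg_{n+1}}$ to secure uniformity in $\xi$; the paper's ``for some $r>0$\dots\ Integrating, this suffices'' leaves that step implicit, and your homogeneity observation ($\|\ell_\xi(\omega)\|=\|\xi\|\,\|\ell_{\xi_0}(\omega)\|$) together with the finite subcover is a clean way to make it rigorous.
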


\begin{proof}
 Write 
 \begin{equation}
   \left[\cdots  \left[ \left[g_1, g_2\right], g_3 \right], \cdots, g_{n}\right] \bmod G_{n+1} = \left\langle \lambda(g_1,g_2, ..., g_{n-1}), g_{n}\right\rangle
 \end{equation}
 in which $\lambda(g_1,g_2, ..., g_{n-1})$ is a linear map from $G_{\ab}$ to $G_n/G_{n+1}$.  Recall that $\lambda$ itself is multilinear in $g_1, g_2, ..., g_{n-1} \bmod G_2$.  Given $\xi \in \widehat{G_n/G_{n+1}}$, $\|\xi\| \geq 1$ one has \begin{equation}\xi\left( \left[\cdots  \left[ \left[g_1, g_2\right], g_3 \right], \cdots, g_{n}\right]\right) = \left(\xi \cdot \lambda(g_1, g_2, ..., g_{n-1}) \right) \left(g_{n} \right). \end{equation}
 Since the semigroup generated by $\supp \mu_{\ab}$ is dense in $G_{\ab}$, and since $G_n = [G_{n-1}, G_1]$ is equal to the set of commutators of the stated type on $G$, and since $\lambda$ is multilinear, it follows that $\xi$ does not annihilate $\lambda(g_1, ..., g_{n-1})$ with positive probability, and hence for some $r > 0$, $\left\|\xi \cdot \lambda(g_1,g_2, ..., g_{n-1})\right\| > r$ with positive probability.  Integrating, this suffices to obtain the Cram\'{e}r condition.
\end{proof}

\begin{lemma}\label{char_fun_bound_lemma}
 There is a constant $c = c(\mu)>0$ such that, for each $1 \leq n \leq s$, for all $\xi^{(n)} \neq 0$, when $k$ is assigned by the rule
 \begin{equation}
  k = \left\{\begin{array}{ccc}\left\lfloor \frac{1}{\left\|\xi^{(n)}\right\|^{\frac{2}{n}}}\right\rfloor, && \left\|\xi^{(n)}\right\| \leq 1\\ 1, && \left\|\xi^{(n)}\right\| >1 \end{array}\right.,
 \end{equation}
 one has
$
 \left|F_{n, \mu^{*k}}\left(\xi^{(n)}\right) \right| \leq 1- c.$

\end{lemma}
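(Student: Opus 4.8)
The plan is to peel the last entry off the iterated commutator and integrate it out, reducing the estimate to a statement about $\hat{\mu}_{\ab}$ alone, and then to combine the Cram\'er hypothesis with the classical central limit theorem on $G_{\ab}$. Write $\bar g$ for the image of $g$ in $G_{\ab}$. By the identity in the proof of Lemma~\ref{cramer_lemma}, for $n\ge 2$ one has $\xi^{(n)}\big([\cdots[[g_1,g_2],g_3],\ldots,g_n]\big)=\kappa_{\xi^{(n)}}(\bar g_1,\ldots,\bar g_{n-1})\cdot\bar g_n$, where $\kappa_\xi(\bar g_1,\ldots,\bar g_{n-1}):=\xi\cdot\lambda(g_1,\ldots,g_{n-1})\in\hat{G}_{\ab}$ is $\bR$-linear in $\xi$ and multilinear of degree $n-1$ in $\bar g_1,\ldots,\bar g_{n-1}$. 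Taking $g_1,\ldots,g_n$ i.i.d.\ $\mu^{*k}$ and integrating over $\bar g_n\sim\mu_{\ab}^{*k}$,
\[
 F_{n,\mu^{*k}}\big(\xi^{(n)}\big)=\E\Big[\hat{\mu}_{\ab}\big(\kappa_{\xi^{(n)}}(\bar g_1,\ldots,\bar g_{n-1})\big)^{k}\Big],\qquad \bar g_1,\ldots,\bar g_{n-1}\ \text{i.i.d.}\ \mu_{\ab}^{*k},
\]
so by the triangle inequality it suffices to bound $\E\big[\,|\hat{\mu}_{\ab}(\kappa_{\xi^{(n)}}(\bar g_1,\ldots,\bar g_{n-1}))|^{k}\,\big]$ away from $1$; for $n=1$ the left side is simply $|\hat{\mu}_{\ab}(\xi^{(1)})|^{k}$. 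Two consequences of the Cram\'er hypothesis are used repeatedly: (a) $\supp\mu_{\ab}$, hence $\supp\mu_{\ab}^{*k}$, affinely spans $G_{\ab}$ and $\mu_{\ab}$ has nondegenerate covariance $\Sigma$ — otherwise $|\hat{\mu}_{\ab}|$ would equal $1$ along a ray; (b) since iterated commutators $[\cdots[[g_1,g_2],g_3],\ldots,g_n]$ generate $G_n$ (used already in Lemma~\ref{alternating_sum_lemma}), the multilinear map $\kappa_\xi$ is not identically zero for $\xi\ne 0$, hence, by (a), $\kappa_\xi(\bar g_1,\ldots,\bar g_{n-1})$ is not almost surely zero. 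Finally, combining $|\hat{\mu}_{\ab}(w)|\le 1-\epsilon(r)$ for $\|w\|\ge r$ with the expansion $|\hat{\mu}_{\ab}(w)|\le 1-c_1\|w\|^2$ valid for small $w$ (as $\mu_{\ab}$ is centered with $\Sigma\succ 0$) yields constants $c_0=c_0(\mu)>0$ and $c''=c''(\mu)>0$ with $|\hat{\mu}_{\ab}(w)|^{k}\le 1-c_0$ whenever $k\|w\|^2\ge 1$, and $|\hat{\mu}_{\ab}(w)|\le e^{-c''\|w\|^2}$ for $\|w\|\le 1$; so it is enough to produce a uniform $p_0>0$ with $\Prob\big(k\|\kappa_{\xi^{(n)}}(\bar g_1,\ldots,\bar g_{n-1})\|^2\ge 1\big)\ge p_0$.

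Two regimes are immediate. If $k=1$ — that is, $\|\xi^{(n)}\|>1$, or $n=1$ with large frequency — then for $n=1$ the bound $|\hat{\mu}_{\ab}(\xi^{(1)})|\le 1-\epsilon(1)$ is the Cram\'er hypothesis, while for $n\ge 2$ one may quote Lemma~\ref{cramer_lemma} directly, which says $\hat{\mu}_n=F_{n,\mu}$ satisfies the Cram\'er condition; alternatively, normalizing $\hat\xi=\xi^{(n)}/\|\xi^{(n)}\|$ to the unit sphere, a compactness argument (using (a), (b)) gives $\Prob(\|\kappa_{\hat\xi}(\bar g_1,\ldots,\bar g_{n-1})\|>\delta_0)\ge p_0$ uniformly, and on that event $\|\kappa_{\xi^{(n)}}(\bar g_1,\ldots,\bar g_{n-1})\|=\|\xi^{(n)}\|\,\|\kappa_{\hat\xi}(\bar g_1,\ldots,\bar g_{n-1})\|>\delta_0$, so Cram\'er applies. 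If $n=1$ and $\|\xi^{(1)}\|\le 1$, then $k=\lfloor\|\xi^{(1)}\|^{-2}\rfloor\ge\tfrac12\|\xi^{(1)}\|^{-2}$ and $|\hat{\mu}_{\ab}(\xi^{(1)})|^{k}\le e^{-c''\|\xi^{(1)}\|^2 k}\le e^{-c''/2}$.

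The substance is the case $n\ge 2$, $0<\|\xi^{(n)}\|\le 1$. Put $\eta=k^{n/2}\xi^{(n)}$; by the prescribed choice of $k$ its norm lies in the fixed compact annulus $K=\{2^{-n/2}\le\|\eta\|\le 1\}$, and, using linearity in $\xi$ and homogeneity of degree $n-1$ in the $\bar g_i$, $k\,\|\kappa_{\xi^{(n)}}(\bar g_1,\ldots,\bar g_{n-1})\|^2=\|\kappa_\eta(k^{-1/2}\bar g_1,\ldots,k^{-1/2}\bar g_{n-1})\|^2$; so the goal becomes a lower bound on $\Prob\big(\|\kappa_\eta(k^{-1/2}\bar g_1,\ldots,k^{-1/2}\bar g_{n-1})\|>1\big)$ uniform over $\eta\in K$ and $k\ge 1$. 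For $k$ large, the multivariate central limit theorem (finite covariance, mean zero) gives $k^{-1/2}\bar g_i\Rightarrow Z_i\sim N(0,\Sigma)$, independent, hence $\kappa_\eta(k^{-1/2}\bar g_1,\ldots,k^{-1/2}\bar g_{n-1})\Rightarrow\kappa_\eta(Z_1,\ldots,Z_{n-1})$; since $\kappa_\eta\not\equiv 0$ and $\Sigma\succ 0$, the limit is a non-constant polynomial image of a full-support Gaussian, hence unbounded, so $\Prob(\|\kappa_\eta(Z_1,\ldots,Z_{n-1})\|>1)>0$, and this is bounded below uniformly over the compact $K$ because $\eta\mapsto\Prob(\|\kappa_\eta(Z_1,\ldots,Z_{n-1})\|>1)$ is continuous (the relevant level sets are Lebesgue-null, hence null for the nondegenerate Gaussian); the bound transfers to the prelimit uniformly in $\eta\in K$ by a routine extended continuous mapping / subsequence argument using tightness, giving the claim for all $k\ge k_0$. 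For the finitely many $1\le k<k_0$, for each such $k$ the map $\eta\mapsto\E_{\bar g_i\sim\mu_{\ab}^{*k}}\big[\,|\hat{\mu}_{\ab}(k^{-n/2}\kappa_\eta(\bar g_1,\ldots,\bar g_{n-1}))|^{k}\,\big]$ is continuous and, by (a), (b), strictly below $1$ at every $\eta\in K$ (it could equal $1$ only if $\kappa_\eta(\bar g_1,\ldots,\bar g_{n-1})=0$ a.s.), hence $\le 1-c_k$ on $K$ by compactness. Taking $c=c(\mu)$ to be the minimum of $c_0$, $1-e^{-c''/2}$, the finitely many numbers $\epsilon(\cdot)$, $p_0\epsilon(\delta_0)$ and $c_k$ appearing above finishes the proof.

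The crux is the uniformity: after the reductions one needs a single $c$ valid simultaneously over $\eta$ in a fixed compact set and over all scales $k\ge 1$. The exponent $2/n$ in the prescribed $k$ is calibrated exactly so that the abelianized increments $\bar g_i$, sums of $k$ i.i.d.\ copies of $\mu_{\ab}$, sit at the central-limit scale $\sqrt k$ while the degree-$n$ commutator phase $\xi^{(n)}\cdot(\text{degree-}n\text{ form in the }\bar g_i)$ has size $\asymp\|\xi^{(n)}\|(\sqrt k)^n\asymp 1$; this is what makes $\|\kappa_\eta(k^{-1/2}\bar g_1,\ldots,k^{-1/2}\bar g_{n-1})\|$ comparable to a fixed constant. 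The delicate points are making the central limit theorem uniform in the auxiliary parameter $\eta\in K$ and dispatching the finitely many small $k$ by bare compactness; both rely on the nondegeneracy of $\kappa_\eta$, which is forced by the fact that iterated commutators generate $G_n$, and on the nondegeneracy of the covariance and the affine-spanning of $\supp\mu_{\ab}$, which are forced by the Cram\'er hypothesis.
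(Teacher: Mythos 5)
Your argument is correct, and the overall architecture matches the paper's: treat $\|\xi^{(n)}\|$ bounded below via the Cram\'er condition, treat $\|\xi^{(n)}\|$ small via the central limit theorem on the rescaled commutator form, use compactness of the annulus $\|k^{n/2}\xi^{(n)}\|\in[2^{-n/2},1]$ to extract a uniform constant, and use the fact that iterated commutators generate $G_n$ to guarantee non-degeneracy of the multilinear form. The genuinely different move is integrating out the last commutator slot first. Writing $F_{n,\mu^{*k}}\bigl(\xi^{(n)}\bigr)=\E\bigl[\hat\mu_{\ab}\bigl(\pm\kappa_{\xi^{(n)}}(\bar g_1,\dots,\bar g_{n-1})\bigr)^{k}\bigr]$ exposes the full convolution power $k$ inside the abelianized characteristic function, so that the elementary bound $|\hat\mu_{\ab}(w)|^{k}\le 1-c_0$ for $k\|w\|^2\ge 1$ (Cram\'er away from $0$, quadratic expansion of $\hat\mu_{\ab}$ near $0$) does most of the work, and the probabilistic input is reduced to a lower bound on $\Prob\bigl(k\|\kappa_{\xi^{(n)}}(\bar g_1,\dots,\bar g_{n-1})\|^2\ge 1\bigr)$ --- a CLT on the $n-1$ remaining factors rather than on the entire degree-$n$ commutator statistic. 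The paper instead applies the functional CLT directly to $k^{-n/2}\sum_\tau(-1)^{|\tau|}\Pi^{(n)}(\tau\cdot\underline\omega)$ and invokes the existence of a continuous limiting density; your version does not need to identify the limit law at all (positivity of $\Prob(\|\kappa_\eta(Z_1,\dots,Z_{n-1})\|>1)$ follows just from unboundedness of a nonzero multilinear form on the support of a full-support Gaussian), and it makes explicit how the $k$th power of $\hat\mu_{\ab}$ supplied by the block length interacts with the scaling $\|\xi^{(n)}\|\asymp k^{-n/2}$. The cost is the extra book-keeping you correctly flag --- separate treatment of the finitely many $1\le k<k_0$, and uniformity in $\eta\in K$ of the weak convergence --- which the paper's two-sentence proof elides; the content there is the same compactness the paper uses and is routine. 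One small remark: the fact that $\supp\mu_{\ab}$ linearly spans $G_{\ab}$, which you use to rule out $\kappa_\eta(\bar g_1,\dots,\bar g_{n-1})=0$ a.s., is indeed a consequence of the Cram\'er hypothesis (a proper linear span would force $|\hat\mu_{\ab}|\equiv 1$ on a nonzero line), and this is the same input the paper encodes in the hypothesis $G=\overline{\langle\supp\mu\rangle}$ of Lemma~\ref{cramer_lemma}.
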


\begin{proof}
 For any fixed $r>0$, for $\left\|\xi^{(n)}\right\| \geq r$ this follows from the Cram\'{e}r condition.  Otherwise, using the description (\ref{single_block_sum}), it follows from the functional central limit theorem that when $\ux$ is drawn from $(\mu^{*k})^{\otimes n}$, 
  \begin{equation}\frac{1}{k^{\frac{n}{2}}} \left(\sum_{\tau \in C_2^{n-1}} (-1)^{|\tau|} \tau \cdot \Pi^{(n)}(\ux) \right)\end{equation}
converges to a continuous probability density.  Since $\left\|\xi^{(n)}\right\| \asymp \frac{1}{k^{\frac{n}{2}}}$, the claim follows.

\end{proof}

\section{Estimates of moments}
Throughout this section $\mu$ is a centered probability measure on $G$.
\begin{lemma}\label{homogeneous_poly_lemma}
Let $m, n, N \geq 1$ and suppose that $\mu$ has $2mn$ finite homogeneous moments.  For all generalized $U$-statistics $U_\alpha$ of homogeneous degree $n$, 
\begin{equation}
 \E_{\mu_\fg^{\otimes N}}\left[\left|U_\alpha(\ux)\right|^{2m} \right] \leq O_{\mu, m n}(1)N^{mn}.
\end{equation}
\end{lemma}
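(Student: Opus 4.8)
The plan is to expand the $2m$-th power and reduce to a combinatorial count of ``surviving'' terms, the key point being that a centered measure annihilates any isolated first-level linear factor.

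Write the initial monomial defining $U_\alpha$ as $m_\alpha(\ux) = \prod_{k=1}^r f_k(x_k)$, where $f_k(x) = \prod_{(i,j)\in S_k}(x^{(i,j)})^{\alpha_k^{(i,j)}}$ is a monomial in a single vector variable with $S_k \neq \emptyset$ and all exponents positive, so that $U_\alpha(\ux) = \sum_{\ell_1 < \cdots < \ell_r}\prod_{k=1}^r f_k(x_{\ell_k})$. Set $d_k = \homdeg(f_k) = \sum_{(i,j)\in S_k} i\,\alpha_k^{(i,j)} \geq 1$; then $\sum_{k=1}^r d_k = n$, hence $r \leq n$, and the ordinary degree of $f_k$ is at most $d_k$. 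First I would expand
\[
\E_{\mu_\fg^{\otimes N}}\!\left[U_\alpha(\ux)^{2m}\right] = \sum \E_{\mu_\fg^{\otimes N}}\!\left[\prod_{a=1}^{2m}\prod_{k=1}^r f_k\!\left(x_{\ell_k^{(a)}}\right)\right],
\]
the sum running over $2m$ increasing $r$-tuples $(\ell_1^{(a)} < \cdots < \ell_r^{(a)})$ in $[N]$. Group the terms according to the partition $P$ of the slot set $[2m]\times[r]$ recording which slots $(a,k)$ are assigned a common index value; the number of index assignments compatible with a given $P$ is at most $N^{|P|}$. By independence of the coordinates of $\mu_\fg^{\otimes N}$, the expectation attached to $P$ factors as $\prod_{B \in P}\E_{\mu_\fg}\!\left[\prod_{(a,k)\in B} f_k(x)\right]$.

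Each factor here is the expectation of a single monomial of homogeneous degree $w(B) := \sum_{(a,k)\in B} d_k$, hence of ordinary degree at most $w(B)$, where $\sum_{B\in P} w(B) = 2mn$ and $|P| \leq 2mr \leq 2mn$; the hypothesis of $2mn$ finite homogeneous moments therefore bounds each such factor, and the whole product over $B\in P$, by $O_{\mu,mn}(1)$. The crux is the vanishing mechanism: if $P$ has a singleton block $B = \{(a,k)\}$ with $w(B) = 1$, then necessarily $d_k = 1$, which forces $f_k(x) = x^{(1,j)}$ for some $j \leq d_1$, and its factor $\E_{\mu_\fg}[x^{(1,j)}]$ vanishes because $\mu$ is centered. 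Hence a non-vanishing $P$ has no block of weight $1$, i.e. $w(B) \geq 2$ for every $B \in P$, which forces $2|P| \leq \sum_{B\in P} w(B) = 2mn$, that is $|P| \leq mn$. Summing over the $O_{m,n}(1)$ partitions $P$ of $[2m]\times[r]$, only the non-vanishing ones contributing, gives
\[
\E_{\mu_\fg^{\otimes N}}\!\left[U_\alpha(\ux)^{2m}\right] \;\leq\; \sum_{P \text{ non-vanishing}} N^{|P|}\, O_{\mu,mn}(1) \;\leq\; O_{\mu,mn}(1)\, N^{mn}.
\]

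I expect no serious obstacle beyond keeping the slot/partition bookkeeping straight. The one substantive point is the recognition that in any surviving term every block of $P$ must carry homogeneous weight at least $2$: this is precisely where the centering hypothesis is used, and it is exactly this that produces the sharp exponent $mn$ rather than the trivial $2mr$.
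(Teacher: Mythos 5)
Your proof is correct and takes essentially the same approach as the paper's: expand the $2m$-th power, use centeredness to kill any term in which some index appears only through a single first-level linear factor, and observe that each surviving index must carry homogeneous weight at least $2$, which caps the number of free indices at $mn$. The paper phrases the count in terms of the number $L$ of distinct indices appearing (bounding this by $(2m)(r-a)+am\leq nm$, where $a$ is the number of degree-one factors in $m_\alpha$) rather than via set partitions of slots, but the bookkeeping is equivalent.
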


\begin{proof}

Let $a$ be the number of indices in $m_\alpha$ of homogeneous degree 1, and note that $n \geq a + 2(r-a) = 2r-a$.  On expanding $\left|U_\alpha(\ux)\right|^{2m}$ and performing expectation, any monomials that have indices which appear with homogeneous degree 1 have expectation 0.  Those remaining monomials have homogeneous degree at least 2 in every coordinate upon which the expectation depends, and hence have expectation $O_{\mu, mn}(1)$ by the moment condition.  The total number of indices which may appear in such a monomial is at most $(2m)(r-a) + am \leq nm$.  Counting the number of monomials with non-vanishing expection by letting $L$ be the number of indices appearing, the  expectation is bounded by
\begin{equation}
 \E\left[\left|U_\alpha(\ux)\right|^{2m}\right] \leq O_{\mu, mn}(1) \sum_{L = 1}^{nm} \binom{N}{L}L^{2rm} \leq  O_{\mu, mn}(1) N^{nm}.
\end{equation}

\end{proof} 
 
\begin{lemma}\label{truncation_lemma}
 For each $A, \delta>0$ there is $C(A, \delta)>0$ such that, if $\mu$ has $C(A, \delta)$ finite homogeneous moments then
 \begin{equation}
  \mu_{\fg}^{\otimes N}\left\{ \max_{n} \frac{1}{N^{\frac{n}{2}}}\left\|\Pi^{(n)}(\ux)\right\| > N^\delta\right\} = O_{\mu, A, \delta}\left(N^{-A}\right).
 \end{equation}

\end{lemma}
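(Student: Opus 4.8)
The plan is to bound the tail probability by combining the moment estimate of Lemma \ref{homogeneous_poly_lemma} with a high-moment Markov inequality, summing over the finitely many levels $n$ and over the finitely many generalized $U$-statistics that build up each $\Pi^{(n)}$. Recall from Lemma \ref{product_lemma} and the discussion following it that $\Pi^{(n)}(\ux) = \sum_{k=1}^N x_k^{(n)} + P_N^{(n)}(\ux)$, where $P_N^{(n)}$ is a linear combination, with coefficients bounded in terms of the fixed basis, of generalized $U$-statistics $U_\alpha$ of homogeneous degree at most $n$. The linear term $\sum_{k=1}^N x_k^{(n)}$ is itself a sum of $U$-statistics of homogeneous degree $n$ (indeed degree $1$), so in all we may write each coordinate $\Pi^{(n,j)}(\ux)$ as a bounded linear combination of $O_D(1)$ generalized $U$-statistics of homogeneous degree at most $n$.

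\textbf{First} I would fix $A, \delta > 0$ and choose an even integer $2m$ large enough that $\frac{\delta}{2} \cdot 2m - \frac{n - \homdeg(U_\alpha)}{2} \cdot 2m$ — wait, more carefully: for a $U$-statistic $U_\alpha$ of homogeneous degree $n' \leq n$, Lemma \ref{homogeneous_poly_lemma} gives $\E_{\mu_\fg^{\otimes N}}\left[|U_\alpha(\ux)|^{2m}\right] \leq O_{\mu, mn}(1) N^{mn'} \leq O_{\mu,mn}(1) N^{mn}$, provided $\mu$ has $2mn$ finite homogeneous moments. By Markov's inequality,
\begin{equation}
 \mu_\fg^{\otimes N}\left\{ \frac{1}{N^{\frac{n}{2}}}|U_\alpha(\ux)| > \frac{N^\delta}{O_D(1)} \right\} \leq \frac{O_{\mu,mn}(1) N^{mn}}{N^{mn} N^{2m\delta}} = O_{\mu, mn}(1) N^{-2m\delta}.
\end{equation}
Choosing $m = m(A,\delta)$ with $2m\delta \geq A$ makes each such term $O_{\mu,A,\delta}(N^{-A})$; then $C(A,\delta) = 2m(A,\delta) s$ finite homogeneous moments suffice for every level $n \leq s$.

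\textbf{Next} I would assemble the pieces: the event $\left\{\max_n \frac{1}{N^{n/2}}\|\Pi^{(n)}(\ux)\| > N^\delta\right\}$ is contained in the union, over $1 \leq n \leq s$, over the $d_n$ coordinates $j$, and over the $O_D(1)$ constituent $U$-statistics $U_\alpha$ of each $\Pi^{(n,j)}$, of the events that $\frac{1}{N^{n/2}}|U_\alpha(\ux)|$ exceeds $N^\delta$ divided by the relevant bounded coefficient sum; here I use $\|\cdot\| \leq \sum_j |\cdot^{(j)}|$ and the triangle inequality on the finite linear combination. Since this is a union of $O_D(1)$ events each of probability $O_{\mu,A,\delta}(N^{-A})$, a union bound gives the claim with the implied constant absorbing the $O_D(1)$ factor.

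\textbf{The main obstacle} — really the only subtlety — is matching the homogeneous degree of each constituent $U$-statistic to the normalization $N^{-n/2}$: a $U$-statistic of homogeneous degree $n' < n$ appearing in $P_N^{(n)}$ contributes size $\sim N^{n'/2} \ll N^{n/2}$, so it is harmless, but one must be careful that the degree-$n$ linear term and the genuine degree-$n$ $U$-statistics in $P_N^{(n)}$ are exactly at the critical scaling and rely on the full strength of Lemma \ref{homogeneous_poly_lemma} with the $\delta$ giving the needed power savings. One should also note the subtraction of the mean in the proof of Lemma \ref{homogeneous_poly_lemma} requires $\mu$ centered, which is in force throughout this section. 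Everything else is a routine union bound.
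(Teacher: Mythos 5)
Your proposal is correct and follows essentially the same route as the paper: decompose $\Pi^{(n)}$ into generalized $U$-statistics via Lemma \ref{product_lemma}, invoke the moment bound of Lemma \ref{homogeneous_poly_lemma}, and finish with a high-moment Markov inequality. The only cosmetic difference is that the paper first aggregates the $U$-statistic moment bounds into a single estimate on $\E[\|\Pi^{(n)}\|^{2m}]$ via the power mean inequality and then applies Markov once per level, whereas you apply Markov to each constituent $U$-statistic and union-bound over the $O_D(1)$ pieces; these are functionally equivalent.
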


\begin{proof}
 If $\mu$ has $2mn$ homogeneous moments then the estimate
 \begin{equation}
  \E_{\mu_{\fg}^{\otimes N}}\left[\left\| \Pi^{(n)}(\ux)\right\|^{2m} \right] \leq O_{\mu, mn}(1)N^{mn}
 \end{equation}
 follows by repeatedly applying the power mean inequality to first estimate $\E\left[\left\|\Pi^{(n)}\right\|^{2m}\right]$ in terms of moments of the individual coordinates $\E\left[\left\|\Pi^{(n,j)}(\ux)\right\|^{2m}\right]$ and then in terms of the moments of individual $U$ statistics of homogeneous degree at most $n$, to which Lemma \ref{homogeneous_poly_lemma} applies. 
 
 The claim now follows by taking a high enough moment and applying Markov's inequality.

\end{proof}

\begin{lemma}\label{tail_lemma}
Let $m \geq 1$ and $1 \leq n \leq s$, and assume that $\mu$ has $2mn$ fininte homogeneous moments.  For all $N' \leq N$, when $\ux = \ux_0 \oplus \ux_t$ is the concatenation of strings of length $N'$ and $N-N'$
 \begin{equation}
  \E_{\mu_\fg^{\otimes N}}\left[\left\|\Pi^{(n)}(\ux) - \Pi^{(n)}(\ux_t)\right\|^{2m} \right] \leq O_{\mu, mn}(1) N^{mn}\left( \frac{N'}{N}\right)^m.
 \end{equation}
\end{lemma}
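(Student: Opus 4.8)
The plan is to expand $\Pi^{(n)}(\ux) - \Pi^{(n)}(\ux_t)$ as a linear combination of generalized $U$-statistics, isolate the property that every surviving $U$-statistic must involve at least one index from the initial block $\ux_0$, and then exploit this to gain the factor $(N'/N)^m$. By the summary of Lemma \ref{product_lemma}, $\Pi^{(n,j)}(\ux) = \sum_k x_k^{(i,j)} + P_N^{(i,j)}(\ux)$ with $P_N^{(i,j)}$ a linear combination, with coefficients bounded in $\ell^1$ by a basis-dependent constant, of generalized $U$-statistics $U_\alpha$ of homogeneous degree at most $n$. Writing $\ux = \ux_0 \oplus \ux_t$ with $\ux_0$ of length $N'$, each $U_\alpha(\ux)$ splits, by choosing for each factor whether its index lies in the $\ux_0$-range or the $\ux_t$-range, into $U_\alpha(\ux_t)$ plus a sum of ``mixed'' generalized $U$-statistics in which at least one of the selected indices is constrained to the first $N'$ coordinates. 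Thus $\Pi^{(n)}(\ux) - \Pi^{(n)}(\ux_t)$ is itself a linear combination (with $O_\mu(1)$ coefficients) of $O_{\mu,n}(1)$ generalized $U$-statistics, each of homogeneous degree $n' \le n$, each of which has at least one of its summation indices restricted to $\{1, \dots, N'\}$.

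Next I would bound the $2m$-th moment of a single such constrained $U$-statistic. This runs parallel to the proof of Lemma \ref{homogeneous_poly_lemma}: expand the $2m$-th power, perform the expectation, discard monomials having an index that appears with homogeneous degree $1$ (these vanish by centeredness), and bound the rest by the moment hypothesis, obtaining $O_{\mu,mn}(1)$ per surviving monomial. The combinatorial count is the only change: now at least $m$ of the $L$ participating indices are forced into the range $\{1, \dots, N'\}$ — indeed each of the $2m$ copies of the $U$-statistic contributes at least one such index, and since a non-vanishing monomial has each index repeated at least twice (or is a homogeneous-degree-$1$ index appearing with power $\ge 1$ across the copies; more care gives the same bound) at least $m$ of the distinct indices lie in the initial block. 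Hence the count $\binom{N}{L} L^{O(m)}$ from Lemma \ref{homogeneous_poly_lemma} is improved to $\binom{N'}{\ge m}\binom{N}{\le nm - m} L^{O(m)} \ll N^{mn}(N'/N)^m$, after summing over $L \le nm$. Applying the triangle inequality in $L^{2m}$ across the $O_{\mu,n}(1)$ constituent $U$-statistics, raising to the $2m$, and re-summing gives the claimed bound $O_{\mu,mn}(1) N^{mn}(N'/N)^m$.

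The main obstacle is the bookkeeping in the combinatorial count, specifically making precise the claim that in a non-vanishing monomial at least $m$ of the distinct indices must come from the initial block. The subtlety is that homogeneous-degree-$1$ variables appear to the first power in each copy of the $U$-statistic, so one copy's ``initial-block index'' could a priori coincide with another copy's, potentially forcing fewer than $m$ distinct initial indices. This is handled by observing that a homogeneous-degree-$1$ index appearing with total power $1$ in the expanded product has expectation zero, so in a surviving monomial every index that appears at all appears with total power $\ge 2$; grouping the $2m$ copies into pairs sharing indices then yields at least $m$ initial-block indices — or, cleanly, one bounds the number of copies whose distinguished initial index is ``new'' from below by $m$ by a pigeonhole on the multiplicity-$\ge 2$ structure. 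Once this counting point is pinned down, the rest is a routine repetition of the argument of Lemma \ref{homogeneous_poly_lemma}.
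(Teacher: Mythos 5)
Your decomposition step is the same as the paper's: reduce to a single generalized $U$-statistic $U_\alpha$ of homogeneous degree at most $n$, and write $U_\alpha(\ux) - U_\alpha(\ux_t)$ as a sum of ``mixed'' $U$-statistics in which at least one selected index is forced into $\{1,\dots,N'\}$. Where your proof has a gap is in the combinatorial count that follows. The claim that a non-vanishing monomial in $\E[|V|^{2m}]$ must use at least $m$ distinct indices from the initial block is false, and the pigeonhole you invoke runs in the wrong direction: knowing that each of the $2m$ copies supplies at least one initial-block slot gives $\ge 2m$ slots (with multiplicity), and requiring every surviving index to have multiplicity $\ge 2$ then gives \emph{at most} $m$ distinct initial-block indices, not at least $m$. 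In fact all $2m$ copies can be assigned the same single initial-block index $i_0$, which then appears with multiplicity $2m \ge 2$ and survives, so the number of distinct initial-block indices can be as small as $1$. Counting by number of distinct initial-block indices therefore does not by itself produce the factor $(N'/N)^m$.

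The paper's proof avoids this by exploiting a structural feature you did not use: each mixed $U$-statistic factors exactly as a product
\begin{equation}
U_\alpha^{0,a}(\ux_0)\,U_\alpha^{t,r-a}(\ux_t),
\end{equation}
a function of $\ux_0$ alone times a function of $\ux_t$ alone, so the expectation factorizes under the product measure. One then applies Lemma~\ref{homogeneous_poly_lemma} to each factor separately: the initial factor has homogeneous degree $h_1 \ge 1$, contributing $O((N')^{mh_1})$, and the tail factor has homogeneous degree $n - h_1 \le n-1$, contributing $O(N^{m(n-h_1)})$. Since $N' \le N$, one has $(N')^{mh_1} \le (N')^m N^{m(h_1-1)}$, whence the product is $\le (N')^m N^{m(n-1)} = N^{mn}(N'/N)^m$. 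The correct accounting is thus by \emph{homogeneous-degree weight} assigned to the initial block (which is $\ge 1$), not by the number of distinct indices landing there. You should replace the pigeonhole count with this factorization-plus-degree-bookkeeping argument; after the factorization the rest reduces cleanly to Lemma~\ref{homogeneous_poly_lemma}, as you intended.
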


\begin{proof}
By repeatedly applying the power mean inequality it suffices to prove, for any generalized $U$-statistic  
\begin{equation}
U_\alpha(\ux) = \sum_{1 \leq \ell_1 < \ell_2 < \cdots < \ell_r } \prod_{k=1}^r \prod_{(i,j) \in S_k} \left(x_{\ell_k}^{(i,j)}\right)^{\alpha_k^{(i,j)}}
\end{equation}
of homogeneous degree at most $n$, 
the estimate
 \begin{equation}
  \E_{\mu_\fg^{\otimes N}}\left[\left|U_\alpha(\ux) - U_\alpha(\ux_t)\right|^{2m} \right] \leq O_{\mu, mn}(1) N^{mn}\left( \frac{N'}{N}\right)^m.
 \end{equation}
Define polynomials, for $1 \leq a \leq r$, $U_\alpha^{0, a}$, $U_\alpha^{t, a}$,
\begin{align}
 U_{\alpha}^{0,a}(\ux) &= \sum_{1 \leq \ell_1 < \ell_2 < \cdots < \ell_a } \prod_{k=1}^a \prod_{(i,j) \in S_k} \left(x_{\ell_k}^{(i,j)}\right)^{\alpha_k^{(i,j)}} \\
 \notag U_\alpha^{t, a}(\ux) &= \sum_{1 \leq \ell_1 < \ell_2 < \cdots < \ell_a } \prod_{k=1}^a \prod_{(i,j) \in S_{r-a + k}} \left(x_{\ell_k}^{(i,j)}\right)^{\alpha_k^{(i,j)}},
\end{align}
and also, make the convention that $U_\alpha^{0,0} = U_\alpha^{t, 0} = 1$.  Hence,
\begin{equation}
 U_\alpha(\ux) - U_{\alpha}(\ux_t) = \sum_{a = 1}^r U_\alpha^{0,a}(\ux_0) U_\alpha^{t, r-a}(\ux_t).
\end{equation}
Applying the power mean inequality one further time, it suffices to prove the estimate, for each $1 \leq a \leq r$, 
\begin{equation}
 \E_{\mu_\fg^{\otimes N}}\left[\left|U_\alpha^{0,a}(\ux_0) U_\alpha^{t, r-a}(\ux_t) \right|^{2m} \right] \leq O_{\mu, mn}(1) N^{mn}\left(\frac{N'}{N} \right)^m.
\end{equation}
Since
\begin{align}
 \E_{\mu_\fg^{\otimes N}}\left[\left|U_\alpha^{0,a}(\ux_0) U_\alpha^{t, r-a}(\ux_t) \right|^{2m} \right] &= \E_{\mu_{\fg}^{\otimes N'}}\left[\left|U_\alpha^{0,a}(\ux_0)\right|^{2m} \right]\E_{\mu_{\fg}^{\otimes (N-N')}}\left[\left| U_\alpha^{t, r-a}(\ux_t)\right|^{2m} \right]
\end{align}
the claim follows from Lemma \ref{homogeneous_poly_lemma}, since, for each $1 \leq a \leq r$, \begin{equation}\homdeg(U_\alpha^{0, a}) + \homdeg(U_\alpha^{t, r-a}) \leq n, \qquad \homdeg(U_\alpha^{t, r-a}) \leq n-1.\end{equation}

\end{proof}

 Denote $\Pi_j(\ux)$ the part of $\Pi(\ux)$ which depends on $x_j$.  Set $\Pi_j^{\leq 3}(\ux)$ (resp. $\Pi_j^{> 3}(\ux)$) the part of $\Pi_j(\ux)$ which is of homogeneous degree $\leq 3$ (resp. $>3$) in $x_j$, and for $d= 1, 2, 3$, $\Pi_j^d(\ux)$ the part of $\Pi_j(\ux)$ which is of homogeneous degree $d$ in $x_j$. $\Pi_j^{*, (n)}$ denotes the part of $\Pi_j^*$ at level $n$.  Use the same notation with $\Pi$ replaced with a $U$-statistic $U_\alpha$.
 
 \begin{lemma}\label{coordinate_bounds_lemma}
 Assume that $\mu$ has at least $6s$ homogeneous moments.
 Let $N \geq 1$,  $1 \leq j \leq N$ and $\umu = \mu^{\otimes (j-1)} \otimes \mu_{\varphi}^{\otimes (N-j + 1)}$ or $\umu = \mu^{\otimes j} \otimes \mu_{\varphi}^{\otimes (N-j)}$.  For $k = 1, 2, 3$, and $m \leq 3$,
 \begin{equation}\label{low_degree_bound}
  \E_{\umu}\left[\left|\xi \cdot \Pi_j^k(\ux)\right|^{2m} \right] = O_\mu\left(\sum_{\ell = k}^s \|\xi_\ell\|^{2m} N^{(\ell-k)m} \right)
 \end{equation}
and
\begin{equation}\label{higher_degree_bound}
 \E_{\umu}\left[\left|\xi \cdot \Pi_j^{>3}(\ux)\right|^2 \right] = O_\mu\left(\sum_{\ell = 4}^{s} \|\xi_\ell\|^2 N^{\ell-4} \right).
\end{equation}

 \end{lemma}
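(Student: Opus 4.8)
The plan is to expand $\Pi_j^k(\ux)$ into generalized $U$-statistics using Lemma~\ref{product_lemma}, to factor each surviving piece as a monomial in $x_j$ times two \emph{independent} lower-degree $U$-statistics supported on the variables before and after index $j$, and then to invoke Lemma~\ref{homogeneous_poly_lemma} together with the independence of the coordinates under $\umu$.

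\emph{Step 1: the combinatorial decomposition.} By Lemma~\ref{product_lemma} and the discussion following it, for each $1 \le \ell \le s$ the block $\Pi^{(\ell)}(\ux)$ is a linear combination, with coefficients of $\ell^1$-norm $O_\mu(1)$, of generalized $U$-statistics
\begin{equation}
 U_\alpha(\ux) = \sum_{1 \le \ell_1 < \cdots < \ell_r \le N} \prod_{a = 1}^r m_{\alpha, a}(x_{\ell_a}), \qquad \textstyle\sum_{a=1}^r \homdeg(m_{\alpha,a}) = \homdeg(U_\alpha) \le \ell,
\end{equation}
each $m_{\alpha,a}$ being a monomial in the coordinates of one vector. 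I would isolate the part of $U_\alpha$ of homogeneous degree $k$ in $x_j$ by forcing $\ell_a = j$ for a block index $a$ with $\homdeg(m_{\alpha,a}) = k$; monotonicity of the remaining $\ell_b$ then confines them to $\{1,\dots,j-1\}$ for $b<a$ and to $\{j+1,\dots,N\}$ for $b>a$, so this part equals
\begin{equation}
 m_{\alpha,a}(x_j)\, U^0_{\alpha,a}(\ux_{<j})\, U^t_{\alpha,a}(\ux_{>j}),
\end{equation}
with $U^0_{\alpha,a}, U^t_{\alpha,a}$ generalized $U$-statistics in the variables of index $<j$, resp.\ $>j$, and $k + \homdeg(U^0_{\alpha,a}) + \homdeg(U^t_{\alpha,a}) = \homdeg(U_\alpha) \le \ell$. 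Summing over the $O_\mu(1)$ choices of $\alpha$ and $a$, and noting that $\Pi^{(\ell)}$ has total homogeneous degree $\le \ell$ so $\Pi_j^{k,(\ell)} = 0$ whenever $k > \ell$, I would conclude that $\xi_\ell \cdot \Pi_j^{k,(\ell)}(\ux)$ is a sum of $O_\mu(1)$ terms of the form $c\, m_\alpha(x_j)\, U^0(\ux_{<j})\, U^t(\ux_{>j})$ with $|c| = O_\mu(\|\xi_\ell\|)$, $\homdeg(m_\alpha) = k$, and $\homdeg(U^0) + \homdeg(U^t) \le \ell - k$.

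\emph{Step 2: bounding one term.} Under either form of $\umu$, all $N$ coordinates are independent, the coordinates of index $<j$ are i.i.d.\ $\mu_\fg$, those of index $>j$ are i.i.d.\ $\mu_{\varphi,\fg}$, and $x_j$ is drawn from $\mu_\fg$ or $\mu_{\varphi,\fg}$. Hence, by independence,
\begin{equation}
 \E_{\umu}\left[\left|m_\alpha(x_j)\, U^0(\ux_{<j})\, U^t(\ux_{>j})\right|^{2m}\right] = \E\left[|m_\alpha(x_j)|^{2m}\right] \E\left[|U^0(\ux_{<j})|^{2m}\right] \E\left[|U^t(\ux_{>j})|^{2m}\right].
\end{equation}
For $m \le 3$ the first factor is $O_\mu(1)$ by the hypothesis of $\ge 6s$ homogeneous moments (trivially finite when $x_j \sim \mu_{\varphi,\fg}$, since $\varphi$ is compactly supported). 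Applying Lemma~\ref{homogeneous_poly_lemma} to the centered measures $\mu$ and $\mu_\varphi$ separately bounds the second and third factors by $O_\mu(1)(j-1)^{m\homdeg(U^0)}$ and $O_\mu(1)(N-j)^{m\homdeg(U^t)}$, each $\le O_\mu(1)N^{m\homdeg(\cdot)}$, so the term is
\begin{equation}
 O_\mu\left(\|\xi_\ell\|^{2m}N^{m(\homdeg(U^0)+\homdeg(U^t))}\right) = O_\mu\left(\|\xi_\ell\|^{2m}N^{m(\ell-k)}\right).
\end{equation}

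\emph{Step 3: assembly.} The power mean inequality applied to the $O_\mu(1)$ terms gives $\E_{\umu}[|\xi_\ell\cdot\Pi_j^{k,(\ell)}(\ux)|^{2m}] = O_\mu(\|\xi_\ell\|^{2m}N^{m(\ell-k)})$, and summing over $k \le \ell \le s$ with the power mean inequality once more yields \eqref{low_degree_bound}. For \eqref{higher_degree_bound} I would write $\Pi_j^{>3} = \sum_{d=4}^s \Pi_j^d$ (the homogeneous degree in $x_j$ of any monomial of $\Pi^{(\ell)}$ is at most $\ell \le s$), run Step~2 with $2m = 2$ on each $\xi_\ell\cdot\Pi_j^{d,(\ell)}$ — nonzero only when $4 \le d \le \ell$ — to get $O_\mu(\|\xi_\ell\|^2 N^{\ell-d})$, and sum; the geometric sum over $d$ is dominated by its $d=4$ term (and the number of levels is $O_\mu(1)$), giving $O_\mu(\sum_{\ell=4}^s \|\xi_\ell\|^2 N^{\ell-4})$.

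\emph{Expected main obstacle.} The only delicate point is Step~1: extracting from each generalized $U$-statistic exactly the part of prescribed homogeneous degree $k$ in the single variable $x_j$, recognizing it via monotonicity of the indices as a product of a monomial in $x_j$ with two independent lower-degree $U$-statistics on the variables before and after $j$, and bookkeeping the homogeneous degrees so that the exponent $\ell - k$ is accounted for exactly. After that the estimate is a mechanical application of Lemma~\ref{homogeneous_poly_lemma} and of the independence built into $\umu$.
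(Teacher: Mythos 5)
Your proposal is correct and takes essentially the same route as the paper: reduce to individual generalized $U$-statistics by the power mean and Cauchy--Schwarz inequalities, factor the degree-$k$-in-$x_j$ part of each $U$-statistic as $m(x_j)\,U_1(\ux_{<j})\,U_2(\ux_{>j})$ using monotonicity of the summation indices, exploit independence of the three factors under $\umu$, and close with Lemma~\ref{homogeneous_poly_lemma} applied separately to the centered measures $\mu$ and $\mu_\varphi$. Your version is slightly more explicit than the paper's (which is terse about the centeredness of $\mu_\varphi$ and writes the degree bookkeeping as an equality $\homdeg(U_1)+\homdeg(U_2)=\ell-k$ where your inequality $\le \ell-k$ is the careful statement), but there is no substantive difference.
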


\begin{proof}
 By the power mean inequality, then Cauchy-Schwarz,
 \begin{align}
  \E_{\umu}\left[\left|\xi \cdot \Pi_j^k(\ux)\right|^{2m} \right] &\ll_s \sum_{\ell = k}^s  \E_{\umu}\left[\left|\xi^{(\ell)} \cdot \Pi_j^{k, (\ell)}(\ux)  \right|^{2m} \right]\\
  \notag & \leq \sum_{\ell = k}^s \left\|\xi^{(\ell)}\right\|^{2m} \E_{\umu}\left[\left\|\Pi_j^{k, (\ell)}(\ux) \right\|^{2m} \right].
 \end{align}
Applying the power mean inequality several further times to first replace $\Pi_j^{k, (\ell)}$ with its individual coordinates, then with an individual generalized $U$-statistic reduces to proving the bound for a $U$-statistic $U_\alpha$ of homogeneous degree $\ell \geq k$,
\begin{equation}
 \E_{\umu}\left[ \left|U_{\alpha, j}^k(\ux)\right|^{2m} \right] = O_\mu\left(N^{(\ell-k)m}\right).
\end{equation}
Let $\ux_0$, $\ux_t$ denote the substrings of $\ux$ prior to $j$ and after $j$ respectively.  The claim follows from Lemma \ref{homogeneous_poly_lemma} after factoring  \begin{equation} U_{\alpha, j}^k(\ux) = U_1(\ux_0) m(x_j)U_2(\ux_t)\end{equation} where $m$ is a monomial of homogeneous degree $k$ and $U_1$ and $U_2$ are $U$-statistics satisfying $\homdeg(U_1) + \homdeg(U_2) = \ell-k$.

The proof of (\ref{higher_degree_bound}) is similar.
\end{proof}

\section{Proof of Theorem \ref{main_theorem}}
The following lemma is used to truncate in frequency space to the scale of the distribution.

\begin{lemma}\label{frequency_truncation_lemma}
 Let $N \geq 1$, $A>0$ and let $1 > \epsilon_1 > \epsilon_2 > \cdots > \epsilon_s > \epsilon_{s+1}=0$ be a collection of constants satisfying for all $1 \leq n < s$, $\epsilon_n > n \epsilon_{n+1}$. Suppose that $\mu$ has $C(A, \epsilon)$ finite homogeneous moments for some constant  $C(A, \epsilon)>0$. If
 \begin{equation}
  \max\left\{ \left\|\xi^{(n)}\right\| N^{\frac{n}{2} - \epsilon_n}: 1\leq n \leq s \right\} > 1,
 \end{equation}
 then $
 |\chi_{N, \mu}(\xi)| = O_{\mu, A, \epsilon}\left(N^{-A}\right).$

\end{lemma}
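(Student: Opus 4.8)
The plan is to reduce, via the Gowers–Cauchy–Schwarz machinery, the bound on $|\chi_{N,\mu}(\xi)|$ to the characteristic-function estimate of Lemma~\ref{char_fun_bound_lemma}, exploiting the hypothesis that some level-$n$ frequency $\xi^{(n)}$ is large on the scale $N^{-(n/2 - \epsilon_n)}$. First I would pick the \emph{largest} such level: let $n$ be maximal with $\|\xi^{(n)}\| N^{n/2 - \epsilon_n} > 1$. The point of working with the top such level is that, after applying the Gowers inequality with the group action $A_{k,n}^{N'}$, the alternating sum $\sum_{s}(-1)^{|s|}\Pi^{(n')}(\utau_s \cdot \ux)$ vanishes for all $n' < n$ by \eqref{lower_phase_cancels}, so only the level-$n$ phase survives; and the hypothesis controls nothing above level $n$, but by maximality $\|\xi^{(j)}\| \le N^{-(j/2 - \epsilon_j)}$ for $j > n$, which one must check contributes negligibly (this is where the condition $\epsilon_j > j\epsilon_{j+1}$ and the moment hypothesis via Lemma~\ref{truncation_lemma} or Lemma~\ref{coordinate_bounds_lemma} enter, bounding the phase contribution of the higher levels by $o(1)$ or absorbing it into an $O(N^{-A})$ error).

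Next I would choose the block length $k$ according to the rule of Lemma~\ref{char_fun_bound_lemma}, namely $k = \lfloor \|\xi^{(n)}\|^{-2/n}\rfloor$ when $\|\xi^{(n)}\| \le 1$ and $k=1$ otherwise, and take $N' \asymp N/(kn)$, which is the key point: the hypothesis $\|\xi^{(n)}\| N^{n/2-\epsilon_n} > 1$ forces $k \le N^{1 - 2\epsilon_n/n}$, hence $N' \gg N^{2\epsilon_n/n}/n$, so $N'$ grows like a positive power of $N$. Applying Lemma~\ref{gcs_lemma} gives
\begin{equation}
 |\chi_{N,\mu}(\xi)|^{2^{n-1}} \le F(\xi,\mu;A_{k,n}^{N'}) = \left(1 - \frac{1 - \RE F_{n,\mu^{*k}}(\xi^{(n)})}{2^{n-1}}\right)^{N'},
\end{equation}
and then Lemma~\ref{char_fun_bound_lemma} yields $|F_{n,\mu^{*k}}(\xi^{(n)})| \le 1 - c$, so $1 - \RE F_{n,\mu^{*k}}(\xi^{(n)}) \ge c$, giving $|\chi_{N,\mu}(\xi)|^{2^{n-1}} \le (1 - c/2^{n-1})^{N'} \le e^{-cN'/2^{n-1}}$. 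Since $N' \gg N^{2\epsilon_n/n}$, this is $O_{\mu,A,\epsilon}(N^{-A})$ for every $A$, after taking $2^{n-1}$-th roots and noting $2^{n-1} \le 2^{s-1}$ is a structure constant.

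The main obstacle is the honest bookkeeping in the first paragraph: Lemma~\ref{gcs_lemma} is stated only for $\xi$ with $\xi^{(j)}=0$ for all $j > n$, so I must first dispose of the higher-level coordinates. When the maximal level $n$ satisfies $n < s$, the surviving phase after the Gowers step genuinely involves $\Pi^{(n)}(\utau_s \cdot \ux)$ only at level $n$ by \eqref{lower_phase_cancels}, but the \emph{full} product $\Pi(\utau_s\cdot\ux)$ still has components at levels $> n$ paired against $\xi^{(j)}$, $j > n$; I would handle this by splitting $e_\xi(\Pi) = e_{\xi_{\le n}}(\Pi) \cdot e_{\xi_{>n}}(\Pi)$, using Cauchy–Schwarz (or a truncation event from Lemma~\ref{truncation_lemma}) to separate the two factors, bounding the high-level factor by $1 + o(1)$ on the bulk event using $\|\xi^{(j)}\| \le N^{-(j/2-\epsilon_j)}$ together with $\|\Pi^{(j)}\| \le N^{j/2+\delta}$ off a set of measure $O(N^{-A})$, for $\delta$ small relative to the gaps $\epsilon_j - j\epsilon_{j+1}$. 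This is routine but is the step where all the hypotheses are actually consumed; everything after it is the clean three-line estimate above.
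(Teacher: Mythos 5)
Your high-level plan — choose $n$ maximal with $\|\xi^{(n)}\|N^{n/2-\epsilon_n}>1$, set the block length $k$ as in Lemma~\ref{char_fun_bound_lemma}, apply the Gowers--Cauchy--Schwarz inequality with $A_{k,n}^{N'}$, and conclude exponential decay from Lemma~\ref{gcs_lemma} and Lemma~\ref{char_fun_bound_lemma} — is exactly right, and you correctly identify the one nontrivial obstacle: Lemma~\ref{gcs_lemma} requires $\xi^{(j)}=0$ for $j>n$, and the full phase $e_\xi(\Pi(\ux))$ carries level-$j$ contributions with $j>n$ that the rearrangement action does not annihilate.

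However, the fix you sketch is not correct. You propose a \emph{pointwise} bound on the high-level phase: $\|\xi^{(j)}\|\leq N^{-(j/2-\epsilon_j)}$ together with $\|\Pi^{(j)}(\ux)\|\leq N^{j/2+\delta}$ off a small event, hence $e_{\xi_{>n}}(\Pi(\ux))=1+o(1)$. But these two bounds yield only $|\xi^{(j)}\cdot\Pi^{(j)}(\ux)|\lesssim N^{\epsilon_j+\delta}$, and since $\epsilon_j>0$ this quantity tends to \emph{infinity} with $N$; the high-level factor is nowhere near $1$. There is no choice of the small parameter $\delta$ that repairs this, since the problematic exponent $\epsilon_j$ is fixed and positive. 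Separating the two exponentials by Cauchy--Schwarz also gains nothing, as each has unit modulus. Put differently, the condition $\epsilon_n>n\epsilon_{n+1}$ is \emph{not} strong enough to make the high-level phase pointwise small; it is strong enough only to make a certain \emph{increment} small in $L^{2m}$, which is where the paper's argument actually goes.

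What the paper does instead is more structural. Write $\ux=\ux_0\oplus\ux_t$ with $|\ux_0|=N'=\lfloor N^{1-\epsilon_n/n-\epsilon_{n+1}}\rfloor$ (when $n<s$), and factor
\begin{equation}
e_\xi(\Pi(\ux))=\prod_{j=1}^n e_{\xi^{(j)}}(\Pi^{(j)}(\ux))\,\prod_{\ell=n+1}^s e_{\xi^{(\ell)}}(\Pi^{(\ell)}(\ux_t))\cdot e\bigl(\Xi_{n+1}(\ux)-\Xi_{n+1}(\ux_t)\bigr).
\end{equation}
The last factor is replaced by its degree-$(2m-1)$ Taylor polynomial $T_{2m-1}(\Xi_{n+1}(\ux)-\Xi_{n+1}(\ux_t))$; by Lemma~\ref{tail_lemma} and H\"{o}lder, the $2m$-th moment of $\Xi_{n+1}(\ux)-\Xi_{n+1}(\ux_t)$ is $\leq\sum_{j>n}N^{m(2\epsilon_j-\epsilon_n/n-\epsilon_{n+1})}$, and the exponent is negative precisely because $\epsilon_j\leq\epsilon_{n+1}<\epsilon_n/n$ — this is where the hypothesis $\epsilon_n>n\epsilon_{n+1}$ is consumed, as a moment bound rather than a pointwise one. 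One then expands $T_{2m-1}$ into monomials $M$; each $M$ together with the frozen factors $e_{\xi^{(\ell)}}(\Pi^{(\ell)}(\ux_t))$ is invariant under a Gowers action $A_{k,n}^{N_1'}$ placed on a contiguous block $[J+1,J+N_1]\subset[1,N']$ with $N_1=\lfloor N'/(2ms)\rfloor$ chosen to avoid the $\leq(2m-1)s$ indices appearing in $M$. Only then does one apply Cauchy--Schwarz in $\ux_t$, the Gowers inequality, Lemma~\ref{gcs_lemma} (now legitimately, because the surviving level-$n$ phase has all higher levels depending only on $\ux_t$), and Lemma~\ref{char_fun_bound_lemma}, getting $\exp(-CN_1')$ with $N_1'\gg N^{\epsilon_n/n-\epsilon_{n+1}}$. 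Your count $N'\gg N^{2\epsilon_n/n}$ is larger than the paper's $N_1'$, but it is unusable: one must sacrifice most of the string to freeze the high-level phases and sidestep the monomial support, and the correct exponent of decay is the smaller one. The Taylor-expansion-plus-substring step is not an optional refinement; it is the mechanism by which the $j>n$ coordinates are removed, and your proof has no valid substitute for it.
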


\begin{proof}
 Let $n$ be maximal such that $\left\|\xi^{(n)}\right\| N^{\frac{n}{2}-\epsilon_n} > 1$.  If $n = s$, set $N' = N$, otherwise, set $N' = \left \lfloor N^{1-\frac{\epsilon_n}{n} - \epsilon_{n+1}} \right \rfloor$. Let $\ux_0$ and $\ux_t$ be strings of vectors from $\bR^q$ of lengths $N'$ and $N-N'$ and let $\ux = \ux_0 \oplus \ux_t$ be the concatenation.    Let 
 \begin{equation}
  \Xi_{n+1}(\ux) = \sum_{j = n+1}^s \xi^{(j)} \cdot \Pi^{(j)}(\ux).
 \end{equation}
 Denote $T_{2m-1}(x) = \sum_{j=0}^{2m-1} \frac{(2\pi i x)^j}{j!}$ the degree $2m-1$ Taylor expansion of $e^{2\pi i x}$ and recall that Taylor's theorem with remainder gives
\begin{equation}
 \left|T_{2m-1}(x) - e^{2\pi i x}\right| \leq \frac{(2\pi x)^{2m}}{(2m)!}.
\end{equation}
It follows that
\begin{align}\label{chi_expectation}
 \chi_{N, \mu}(\xi)= & \E_{\mu_{\fg}^{\otimes N}}\left[ \prod_{j=1}^n e_{\xi^{(j)}}(\Pi^{(j)}(\ux)) \prod_{\ell=n+1}^s e_{\xi^{(\ell)}}(\Pi^{(\ell)}(\ux_t)) T_{2m-1}\left(\Xi_{n+1}(\ux) - \Xi_{n+1}(\ux_t) \right) \right]
 \\\notag & + O_m\left( \E_{\mu_{\fg}^{\otimes N}}\left[\left|\Xi_{n+1}(\ux) - \Xi_{n+1}(\ux_t) \right|^{2m} \right]\right).
\end{align}
 
By Lemma \ref{tail_lemma} and H\"{o}lder's inequality, if $\mu$ has sufficiently many homogeneous moments, 
\begin{align}
 \E_{\mu_{\fg}^{\otimes N}}\left[\left|\Xi_{n+1}(\ux) - \Xi_{n+1}(\ux_t) \right|^{2m} \right] &\ll_{\mu, m} \sum_{j = n+1}^s \left\|\xi^{(j)}\right\|_{2m}^{2m} N^{m(j-1)} (N')^m\\
 \notag & \leq \sum_{j=n+1}^s \left(N^{-\frac{j}{2} +\epsilon_j} \right)^{2m} N^{m(j-1)} (N')^m\\
 \notag & \leq \sum_{j=n+1}^s N^{m \left(2 \epsilon_j - \frac{\epsilon_n}{n} -\epsilon_{n+1} \right)}.
\end{align}
Since each exponent is negative, the sum may be made $O_{\mu, A, \epsilon}\left( N^{-A}\right)$ by choosing $m$ sufficiently large in terms of $A$ and $\epsilon$.

Expand $T_{2m-1}\left(\Xi_{n+1}(\ux) - \Xi_{n+1}(\ux_t) \right)$ into monomials of degree bounded by $(2m-1)s$ with coefficients of $\ell^1$ norm bounded by $\ll_B N^B$.  Set $N_1 = \left\lfloor \frac{N'}{2ms} \right \rfloor$.  Given a typical monomial $M$, let $[J+1, J+N_1]$ be a set of indices from $[1, N']$ which does not meet $M$. 

Define, 
\begin{equation}
 k = \left\{\begin{array}{ccc}\left\lfloor \frac{1}{\left\|\xi^{(n)}\right\|^{\frac{2}{n}}}\right\rfloor, && \|\xi^{(n)}\| \leq 1\\ 1, && \|\xi^{(n)}\| >1 \end{array}\right., \qquad N_1'= \left \lfloor \frac{N_1}{kn} \right \rfloor.
\end{equation}
  Let $A_{k, n}^{N_1'} = (C_2^{n-1})^{N_1'}$ act on the substring $[J+1, J+N_1]$ as described in Section \ref{action_section}. Since the monomial $M$ is invariant under the group action, its contribution to the expectation (\ref{chi_expectation}) is given by
\begin{align}\label{action_expectation}
 \E_{\ux_t \sim \mu_{\fg}^{\otimes (N-N')}}\left[M\prod_{j=n+1}^s e_{\xi^{(j)}}(\Pi^{(j)}(\ux_t)) \E_{\ux_0 \sim \mu_{\fg}^{\otimes N'}} \left[\E_{\utau \in A_{k,n}} \left[ \prod_{j=1}^n e_{\xi^{(j)}}(\Pi^{(j)}(\utau \cdot\ux)) \right]\right] \right].
\end{align}
By Cauchy-Schwarz,
\begin{align}\notag 
 |(\ref{action_expectation})|^2 \leq &\E_{\ux_t \sim \mu_{\fg}^{\otimes (N-N')}}\left[|M|^2 \right]\\& \notag \times \E_{\ux_t \sim \mu_{\fg}^{\otimes (N-N')}}\left[\left|  \E_{\ux_0 \sim \mu_{\fg}^{\otimes N'}} \left[\E_{\utau \in A_{k,n}} \left[ \prod_{j=1}^n e_{\xi^{(j)}}(\Pi^{(j)}(\utau \cdot\ux))) \right]\right]\right|^2 \right].
\end{align}
Bound the first expectation by a constant.  In the case $n \geq 2$,  apply Gowers-Cauchy-Schwarz to bound the second expectation, using Lemma \ref{gcs_lemma} to evaluate the expectation that results.  In either the case $n=1$ or $n\geq 2$, it follows from Lemma \ref{char_fun_bound_lemma} that 
\begin{align}
 |(\ref{action_expectation})|^{2^{n-1}} \ll_\mu \left(1 - \frac{1}{2^{n-1}} + \frac{\left|F_{n, \mu_{\fg}^{*k}}(\xi^{(n)})\right|}{2^{n-1}} \right)^{N_1'} \ll_\mu \exp\left(-CN_1' \right).
\end{align}
 Since 
\begin{equation}
 N_1' \gg \frac{N_1}{k} \gg \min\left(1, \left\|\xi^{(n)}\right\|^{\frac{2}{n}}\right) N^{1-\frac{\epsilon_n}{n} -\epsilon_{n+1}} \gg N^{\frac{\epsilon_n}{n} - \epsilon_{n+1}} 
\end{equation}
tends to infinity with $N$ like a small power of $N$, the exponential savings dominates the polynomial bound on the coefficients of the monomials, which proves the lemma.
\end{proof}

 The following Lindeberg exchange lemma approximates the distribution of $\mu^{*N}$ with that of $\mu_{\varphi}^{*N}$.
\begin{lemma}\label{Lindeberg_lemma}
 Let $N \geq 1$. Let $1 > \epsilon_1 > \epsilon_2 > \cdots > \epsilon_s > \epsilon_{s+1}=0$ be a collection of constants satisfying for all $1 \leq n < s$, $\epsilon_n > n \epsilon_{n+1}$. Assume that \begin{equation}
  \max\left\{ \left\|\xi^{(n)}\right\| N^{\frac{n}{2} - \epsilon_n}: 1\leq n \leq s \right\} \leq 1.
 \end{equation}
 Then 
 \begin{equation}
  \left|\chi_{N, \mu}(\xi) - \chi_{N, \mu_{\varphi}}(\xi) \right| = O_\mu\left( N^{-1 + O(\epsilon_1)}\right).
 \end{equation}

\end{lemma}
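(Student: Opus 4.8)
The plan is to run a Lindeberg replacement scheme, exchanging the increments of $\mu_\fg$ for those of $\varphi_\fg$ one coordinate at a time. For $0 \le j \le N$ put $\umu_j = \mu^{\otimes j} \otimes \mu_\varphi^{\otimes (N-j)}$, so that $\E_{\umu_N}[e_\xi(\Pi(\ux))] = \chi_{N, \mu}(\xi)$ and $\E_{\umu_0}[e_\xi(\Pi(\ux))] = \chi_{N, \mu_\varphi}(\xi)$, and telescope
\begin{equation}
 \chi_{N, \mu}(\xi) - \chi_{N, \mu_\varphi}(\xi) = \sum_{j = 1}^N \left( \E_{\umu_j}\left[e_\xi(\Pi(\ux))\right] - \E_{\umu_{j-1}}\left[e_\xi(\Pi(\ux))\right]\right).
\end{equation}
The measures $\umu_j$ and $\umu_{j-1}$ agree except in the $j$th coordinate, where $\umu_j$ uses $\mu_\fg$ and $\umu_{j-1}$ uses $\varphi_\fg$, and in both this coordinate is independent of the rest. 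It therefore suffices to prove that each summand is $O_\mu\left(N^{-2 + O(\epsilon_1)}\right)$ (taking for granted that the fixed $\mu$ has enough homogeneous moments to apply Lemma \ref{coordinate_bounds_lemma}); summing the $N$ terms then yields the stated bound.

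Fix $j$ and condition on the coordinates $x_i$, $i \ne j$. Write $\Pi(\ux) = \Pi'(\ux) + \Pi_j(\ux)$ with $\Pi'(\ux) = \Pi(\ux) - \Pi_j(\ux)$ independent of $x_j$; since $e_\xi$ is a character, $e_\xi(\Pi(\ux)) = e_\xi(\Pi'(\ux))\, e_\xi(\Pi_j(\ux))$ with $|e_\xi(\Pi'(\ux))| = 1$, so the $j$th summand is bounded by
\begin{equation}
 \E_{(x_i)_{i \ne j}}\left[\left| \E_{x_j \sim \mu_\fg}\left[e_\xi(\Pi_j(\ux))\right] - \E_{x_j \sim \varphi_\fg}\left[e_\xi(\Pi_j(\ux))\right] \right|\right].
\end{equation}
Split $\xi \cdot \Pi_j = Y + Z$ with $Y = \xi \cdot \Pi_j^{\le 3}$ and $Z = \xi \cdot \Pi_j^{>3}$. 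Because $\left|e_\xi(\Pi_j) - e^{2\pi i Y}\right| \le 2\pi |Z|$, and by (\ref{higher_degree_bound}) together with the frequency hypothesis $\|\xi^{(n)}\| \le N^{-\frac n2 + \epsilon_n}$ one has $\E_{\umu}[|Z|] \le \E_{\umu}[|Z|^2]^{1/2} = O_\mu\left(N^{-2 + O(\epsilon_1)}\right)$ for $\umu \in \{\umu_j, \umu_{j-1}\}$, it is enough to compare $\E_{x_j \sim \mu_\fg}\left[e^{2\pi i Y}\right]$ and $\E_{x_j \sim \varphi_\fg}\left[e^{2\pi i Y}\right]$. Taylor expand $e^{2\pi i Y} = \sum_{m=0}^3 \frac{(2\pi i Y)^m}{m!} + O\left(|Y|^4\right)$, write $Y = Y_1 + Y_2 + Y_3$ with $Y_k = \xi \cdot \Pi_j^k$, expand the powers, and collect monomials by homogeneous degree in $x_j$ to obtain $e^{2\pi i Y} = H + R$, where
\begin{equation}
 H = 1 + 2\pi i (Y_1 + Y_2 + Y_3) + \frac{(2\pi i)^2}{2}\left(Y_1^2 + 2 Y_1 Y_2\right) + \frac{(2\pi i)^3}{6} Y_1^3
\end{equation}
collects exactly the monomials of homogeneous degree $\le 3$ in $x_j$, while $R$ collects the term $O(|Z|)$, the Taylor remainder $O\left(|Y|^4\right)$, and all products of at most three of the $Y_k$ that have homogeneous degree $\ge 4$ in $x_j$ (namely $Y_1 Y_3$, $Y_2^2$, $Y_2 Y_3$, $Y_3^2$ and $Y^3 - Y_1^3$), each with a bounded coefficient.

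Since $\varphi_\fg$ has first three homogeneous moments matching those of $\mu_\fg$, one has $\int_\fg m \, d\mu_\fg = \int_\fg m \, d\varphi_\fg$ for every monomial $m$ in the coordinates of $\fg$ of homogeneous degree at most $3$; as $H$ is a polynomial in $x_j$ all of whose monomials have homogeneous degree $\le 3$ in $x_j$ (with coefficients depending only on $(x_i)_{i \ne j}$ and $\xi$), it follows that $\E_{x_j \sim \mu_\fg}[H] = \E_{x_j \sim \varphi_\fg}[H]$ as functions of $(x_i)_{i \ne j}$. Hence the inner difference equals $\E_{x_j \sim \mu_\fg}[R] - \E_{x_j \sim \varphi_\fg}[R]$ and the $j$th summand is at most $\E_{\umu_j}[|R|] + \E_{\umu_{j-1}}[|R|]$. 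By H\"older's inequality it remains to bound, with respect to both hybrid measures, the $L^1$ norm of each constituent of $R$: invoking (\ref{low_degree_bound}) of Lemma \ref{coordinate_bounds_lemma} (which is stated precisely for the two measures $\mu^{\otimes(j-1)} \otimes \mu_\varphi^{\otimes(N-j+1)}$ and $\mu^{\otimes j} \otimes \mu_\varphi^{\otimes(N-j)}$ occurring here), the frequency hypothesis gives $\E_{\umu}\left[|Y_k|^{2m}\right] = O_\mu\left(N^{-km + O(m \epsilon_1)}\right)$ for $\umu \in \{\umu_j, \umu_{j-1}\}$ and $m \le 3$, so a product of at most three $Y_k$ of total homogeneous degree $t$ in $x_j$ has $L^1$ norm $O_\mu\left(N^{-t/2 + O(\epsilon_1)}\right)$, which is $O_\mu\left(N^{-2 + O(\epsilon_1)}\right)$ for $t \ge 4$, while $\E_{\umu}\left[|Y|^4\right] = O_\mu\left(N^{-2 + O(\epsilon_1)}\right)$ controls the Taylor remainder. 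As $R$ is a sum of $O_s(1)$ such terms, the $j$th summand is $O_\mu\left(N^{-2 + O(\epsilon_1)}\right)$, and summing over $j$ completes the proof.

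The only delicate point I anticipate is the bookkeeping in the third step: after expanding $(Y_1 + Y_2 + Y_3)^m$ for $m \le 3$ one must correctly isolate the monomials of homogeneous degree $\le 3$ in $x_j$, which cancel upon the three-moment replacement, and then verify that every remaining monomial, along with $\Pi_j^{>3}$ and the order-$4$ Taylor tail, is of size $O_\mu\left(N^{-2 + O(\epsilon_1)}\right)$. This is routine given Lemma \ref{coordinate_bounds_lemma}; in particular it is essential that that lemma be available for both of the hybrid measures $\umu_{j-1}$ and $\umu_j$ appearing at each replacement step.
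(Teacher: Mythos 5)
Your proof is correct and follows essentially the same route as the paper: the same Lindeberg telescoping over hybrid measures, the same split of $\xi\cdot\Pi_j$ into the homogeneous-degree-$\le 3$ part $Y$ and the tail $Z$, the same degree-$3$ Taylor expansion with cancellation of the moment-matched main term, and the same appeal to Lemma~\ref{coordinate_bounds_lemma} (parts (\ref{low_degree_bound}) and (\ref{higher_degree_bound})) for the hybrid measures to bound the remainder by $O_\mu(N^{-2+O(\epsilon_1)})$ per term. Your bookkeeping of the leftover degree-$\ge 4$ monomials in the Taylor expansion is just a slightly more explicit version of the paper's compact error bound $O\bigl(|\xi\cdot\Pi_j^1|^4 + (1+|\xi\cdot\Pi_j^1|^3)(|\xi\cdot\Pi_j^2|^2 + |\xi\cdot\Pi_j^3|^2)\bigr)$.
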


\begin{proof}

 Define
 \begin{equation}
  \Delta_j =  \E_{\mu_{\fg}^{\otimes j} \otimes \mu_{\varphi, \fg}^{\otimes (N-j)}}\left[ e_\xi(\Pi(\ux))\right] -\E_{\mu_{\fg}^{\otimes (j-1)} \otimes \mu_{\varphi, \fg}^{\otimes (N-j+1)}}\left[ e_\xi(\Pi(\ux))\right] 
 \end{equation}
so that, by the triangle inequality,
\begin{equation}
 \left|\chi_{N, \mu}(\xi) - \chi_{N, \mu_{\varphi}}(\xi)\right| \leq \sum_{j=1}^N |\Delta_j|.
\end{equation}

 For $1 \leq j \leq N$ bound,  moving expectation against $x_j$ to the inside and using the triangle inequality, 
 \begin{align}
 |\Delta_j| \leq \E_{\mu_{\fg}^{\otimes (j-1)}\otimes \mu_{\varphi, \fg}^{\otimes (N-j)}}\left[\left| \int_{\fg} e_{\xi}(\Pi_j(\ux))d\mu_{\fg}(x_j) - \int_{\fg}e_{\xi}(\Pi_j(\ux))   \varphi_{\fg}(x_j)dx_j \right|  \right].
 \end{align}
Using $|e(x) - e(y)| \leq 2\pi |x-y|$ and the triangle inequality, the right hand side is bounded by a constant times
\begin{align}
 &\E_{\mu_{\fg}^{\otimes j} \otimes \mu_{\phi, \fg}^{\otimes (N-j)}}\left[\left|\xi \cdot \Pi_j^{>3}(\ux) \right|\right]+\E_{\mu_{\fg}^{\otimes (j-1)} \otimes \mu_{\varphi, \fg}^{\otimes (N-j+1)}}\left[\left|\xi \cdot \Pi_j^{>3}(\ux) \right|\right]\\
 &\notag +\E_{\mu_{\fg}^{\otimes (j-1)}\otimes \mu_{\varphi, \fg}^{\otimes (N-j)}}\left[\left| \int_{\fg} e_{\xi}\left(\Pi_j^{\leq 3}(\ux)\right)d\mu_{\fg}(x_j)  - \int_{\fg}e_{\xi}\left(\Pi_j^{\leq 3}(\ux)\right)   \varphi_{\fg}(x_j)dx_j\right|  \right].
\end{align}
Note that $\Pi_j^{(n), > 3}$ is of homogeneous degree $\leq n-4$ in the variables other than $x_j$, while $\|\xi^{(n)}\| \leq N^{-\frac{n}{2} + \epsilon_n}$.  Thus, by Cauchy-Schwarz and (\ref{higher_degree_bound}) of Lemma \ref{coordinate_bounds_lemma}, the top line is $O_\mu\left(N^{-2 + O(\epsilon_1)}\right)$.

By Taylor expansion,
\begin{align}
  e_{\xi}\left(\Pi_j^{\leq 3}(\ux)\right)& = 1 + i 2\pi  \xi \cdot \Pi_j^1(\ux) - \frac{1}{2} \left( 2\pi  \xi \cdot \Pi_j^1(\ux) \right)^2 - \frac{i}{6} \left( 2\pi  \xi \cdot \Pi_j^1(\ux) \right)^3 \\ \notag &+ i2 \pi \xi \cdot \Pi_j^2(\ux) - 4\pi^2 (\xi \cdot \Pi_j^1(\ux))(\xi \cdot \Pi_j^2(\ux)) + i 2\pi \xi \cdot \Pi_j^3(\ux) \\
  & \notag + O\left(\left|\xi \cdot \Pi_j^1(\ux) \right|^4 + \left(1 + |\xi \cdot \Pi_j^1(\ux)|^3 \right)\left(|\xi\cdot \Pi_j^2(\ux)|^2 + |\xi \cdot \Pi_j^3(\ux)|^2 \right) \right).
\end{align}
Since the main term has homogeneous degree at most 3 in $x_j$, and since the first three homogeneous moments of $\mu$ and $\mu_{\varphi}$ agree, the integral of these terms cancel. In the error term, separate 
$|\xi \cdot \Pi_j^1(\ux)|^3$ from $|\xi\cdot \Pi_j^2(\ux)|^2$ and $|\xi \cdot \Pi_j^3(\ux)|^2$ with Cauchy-Schwarz.  Now applying (\ref{low_degree_bound}) of Lemma \ref{coordinate_bounds_lemma}, the error term is bounded in expectation by $O_\mu\left(N^{-2 + O(\epsilon_1)}\right)$ as before.

\end{proof}

\begin{proof}[Proof of Theorem \ref{main_theorem}]
Let $f$ be the Lipschitz, compactly supported test function of the theorem, let $f_{\fg}$ be the push-forward by the logarithm map to the Lie algebra, and identify $f_{\fg}$ as a Lipschitz function on $\bR^q$.  
By \cite{A02a} Theorem 1.9.1, integrating the pointwise approximation to the heat kernel $u_N$,
\begin{equation}\label{alexopoulos}
 \left \langle L_g R_h f, \mu_{\varphi}^{*N}\right \rangle = \left \langle L_g R_h f, u_N\right \rangle + O\left(\|f\|_1 N^{-\frac{D+1}{2}} \right).
\end{equation}
Thus it suffices to show that
\begin{equation}\label{exchange_approximation}
 \left| \left\langle L_g R_h f, \mu^{*N}\right \rangle - \left\langle L_g R_h f, \mu_{\varphi}^{*N}\right \rangle \right| = O_\mu\left(\|f\|_1N^{-\frac{D+1}{2}} \right) + O_{\mu, A, f}\left( N^{-A}\right).
\end{equation}

As in Section \ref{test_function_section}, let $p_{g,h}$ and $q_{g,h}$ be polynomials such that 
\begin{equation} x' = \log g * x * \log h = p_{g,h}(x), \qquad x = q_{g,h}(x').\end{equation}
Thus
\begin{align}
 \left \langle L_g R_h f, \mu^{*N}\right \rangle &= \int_G f(gxh) d\mu^{*N}(x)\\
 \notag &= \int_{\fg} f_{\fg}\left(\log g * \Pi(\ux) *\log h \right) d\mu_{\fg}^{\otimes N} = \int_{\fg} f_{\fg}(p_{g,h}(\Pi(\ux)))d\mu_{\fg}^{\otimes N}. 
\end{align}

First consider the case that $\Ht(q_{g,h}) \geq N^{C}$ for a fixed constant $C$.  Let for some $B>0$, $\supp f_{\fg} \subset \left[ - \frac{B}{2}, \frac{B}{2}\right]^q$, let $\delta > 0$ and let
\begin{equation}
 S_{\Sm} = \left\{x' \in \left[ - \frac{B}{2}, \frac{B}{2}\right]^q:  \max_n \left\{\left\|q_{g,h}^{(n)}(x')\right\| N^{-\frac{n}{2}} \right\} \leq N^\delta \right\}.
\end{equation}
If $C$ is sufficiently large then Lemma \ref{small_values_lemma} implies that 
\begin{equation}
 \meas(S_{\Sm}) = O_{f, A}\left(N^{-A} \right).
\end{equation}
Let $f_{\fg} = f_{\fg, 1} + f_{\fg, 2}$, with $f_{\fg,1} = f_{\fg}|_{S_{\Sm}}$. Since $f$ is Lipschitz,
\begin{equation}
 \|f_1\|_1 \leq \|f\|_\infty \meas(S_{\Sm})
\end{equation}
and hence 
\begin{equation}\label{small_q}\langle L_g R_h f_1, \mu^{*N}\rangle, \;\langle L_g R_h f_1, \mu_{\varphi}^{*N}\rangle = O_{A, f}\left(N^{-A} \right).\end{equation}
Meanwhile, by Lemma \ref{truncation_lemma}, 
\begin{equation}
 \mu^{\otimes N}\left\{\max_n \left\{\left\|\Pi^{(n)}(x)\right\| N^{-\frac{n}{2}}\right\} > N^\delta\right\} = O_{\mu, A}\left(N^{-A}\right), 
\end{equation}
and similarly for $\mu_{\varphi}$.  Since $x' \in \supp f \setminus S_{\Sm}$ implies that 
\begin{equation}
 \max_n \left\{\left\|x^{(n)}\right\| N^{-\frac{n}{2}}\right\} > N^\delta
\end{equation}
it follows  that
\begin{equation}\label{large_q}
 \langle L_g R_h f_2,  \mu^{*N}\rangle,\; \langle L_g R_h f_2, \mu_{\varphi}^{*N}\rangle = O_{\mu, A, f}\left(N^{-A} \right).
\end{equation}
Together (\ref{small_q}) and (\ref{large_q}) imply Theorem \ref{main_theorem} in this case.

Now suppose that $\Ht(q_{g,h}) \leq N^C$ so that $\Ht(p_{g,h}) \ll N^{C'}$ for some $C' > 0$.
Let $\sigma$ be a compactly supported bump function on $\bR^q$ with dilation, for $t > 0$, $\sigma_t(x) = t^q \sigma\left(t x \right)$.  Let $f_{\fg, t} = f_{\fg} * \sigma_t$ be the Euclidean convolution.  Choose $t \gg \|f\|_1^{-1} N^{\frac{D+1}{2}}$ so that $\left\|f_{\fg} - f_{\fg, t}\right\|_\infty \ll \|f\|_1 N^{-\frac{D+1}{2}}$.  It thus suffices to prove (\ref{exchange_approximation}) with $f$ replaced by $f_t$. 

Expand, using the Fourier transform,
\begin{align}
\left \langle L_g R_h f_t, \mu^{*N} \right \rangle &= 
 \int_{\fg^N} f_{\fg, t}\left(\log g * \Pi(\ux) * \log h \right) d\mu_{\fg}^{\otimes N}(\ux)\\
\notag &= \int_{ \hat{\fg}} \widehat{L_g R_h f}_{\fg, t}(\xi) \E_{\mu_{\fg}^{\otimes N}}\left[e_\xi(\Pi(\ux)) \right] d\xi.
\end{align}
Since the  test function $f_{\fg, t}$ is smooth, the integral converges absolutely.  

Let $\epsilon_1 > \epsilon_2 > \cdots > \epsilon_s > 0$ be a collection of constants as in Lemmas \ref{frequency_truncation_lemma} and \ref{Lindeberg_lemma}.  Define
\begin{equation}
 E_{\Sm} = \left\{\xi \in \hat{\fg}: \max\left\{\left\|\xi^{(n)}\right\|N^{\frac{n}{2}-\epsilon_n}: 1 \leq n \leq s\right\}  \leq 1\right\}.
\end{equation}
Apply Lemma \ref{frequency_truncation_lemma}, and Lemma \ref{Fourier_decay_lemma} with $n = q+2$, to obtain
\begin{align}\label{fourier_truncation}
 &\left| \int_{ E_{\Sm}^c}  \widehat{L_g R_h f}_{\fg, t}(\xi) \chi_{N, \mu}(\xi) d\xi\right|\leq \left\|\chi_{N, \mu}\big|_{E_{\Sm}^c}\right\|_\infty \left\| \widehat{L_g R_h f}_{\fg, t}\big|_{E_{\Sm}^c}\right\|_1 = O_{\mu, A, \epsilon, f}\left(  N^{-A}\right).
\end{align}
Thus 
\begin{align}\label{truncated_representation}
 \left \langle L_g R_h f_t, \mu^{*N} \right \rangle = & O_{\mu, A, \epsilon,f}\left( N^{-A}\right)+ \int_{E_{\Sm}} \widehat{L_g R_h f}_{\fg, t}(\xi) \chi_{N, \mu}(\xi) d\xi,
\end{align}
and similarly with $\mu$ replaced by $\mu_{\varphi}$.

On the remainder of the integral, apply Lemma \ref{Lindeberg_lemma} to obtain
\begin{align}
 &\left|\int_{E_{\Sm}}\widehat{L_g R_h f}_{\fg, t}(\xi) \left(\chi_{N, \mu}(\xi) - \chi_{N, \mu_{\varphi}}(\xi)\right) d\xi\right|\\ \notag &\leq \|f\|_1 \left\|\left(\chi_{N, \mu}(\xi) - \chi_{N, \mu_{\varphi}}(\xi) \right)\big|_{E_{\Sm}}\right\|_\infty \meas(E_{\Sm}) \\& \notag \ll_\mu\|f\|_1 N^{-\frac{D+2}{2} + O(\epsilon_1)} .
\end{align}
Choose $\epsilon_1$ sufficiently small but fixed so that the error term is $O_\mu\left(\|f\|_1 N^{-\frac{D+1}{2}} \right)$, which proves Theorem \ref{main_theorem} in the remaining case.

Evidently the argument presented requires only finitely many moments of the measure $\mu$, but how many?  To gain convergence in the Fourier integral (\ref{fourier_truncation}) it was necessary to integrate by parts $n = q+2$ times, which costs a factor of $\Ht(p_{g,h})^{O_{s,q}(1)}$.  Hence the number of moments depends on $A$, the dimension $q$ and the step $s$, and hence is controlled by $A$ and the homogeneous dimension $D$.
\end{proof}

\bibliographystyle{plain}

\end{document}